\numberwithin{equation}{section}
\newcommand{\be}{\begin{equation}}
\newcommand{\ee}{\end{equation}}
\newcommand{\benn}{\begin{equation*}}
\newcommand{\eenn}{\end{equation*}}
\newcommand{\bea}{\begin{eqnarray}}
\newcommand{\eea}{\end{eqnarray}}
\newcommand{\beann}{\begin{eqnarray*}}
\newcommand{\eeann}{\end{eqnarray*}}
\newtheorem{theorem}{Theorem}[section]
\newtheorem{lemma}[theorem]{Lemma}
\newtheorem{definition}[theorem]{Definition}
\newtheorem{remark}[theorem]{Remark}
\newtheorem{assumptions}[theorem]{Assumptions}
\newtheorem{ansatz}[theorem]{Ansatz}
\newcommand{\qed}{\hfill $\Box$\smallskip}
\newcommand{\E}{\noindent{$\mathbb{E}$ \ }}
\def\R{\mathbb{R}}
\def\N{\mathbb{N}}
\def\P{\mathbb{P}}
\def\E{\mathbb{E}}
\def\P{\mathbb{P}}
\def\cF{\mathcal{F}}
\def\cH{\mathcal{H}}
\def\cL{\mathcal{L}}
\def\cN{\mathcal{N}}
\def\cO{\mathcal{O}}
\def\cS{\mathcal{S}}
\def\txtd{{\textnormal{d}}}
\def\txtD{{\textnormal{D}}}
\title{Bifurcation theory for SPDEs: finite-time Lyapunov exponents and amplitude equations}
\author{Dirk Bl\"omker\thanks{Institut f\"ur Mathematik, Universit\"at Augsburg, Universit\"atsstra{\ss}e 12, 86135 Augsburg, Germany.~E-Mail: dirk.bloemker@math.uni-augsburg.de}~~~and~~Alexandra Neam\c tu\thanks{ Department of Mathematics and Statistics, University of Konstanz,
Universit\"atsstra\ss{}e~10, 78464 Konstanz, Germany. E-Mail: alexandra.neamtu@uni-konstanz.de} }
\begin{document}
\maketitle

\begin{abstract}
We consider a stochastic partial differential equation close to bifurcation of pitchfork type, where a one-dimensional space changes its stability.  
For finite-time Lyapunov exponents we characterize regions depending on the distance from bifurcation and the noise strength where finite-time Lyapunov exponents are positive and thus detect changes in stability. 

One technical tool is the reduction of the 
essential dynamics of the infinite dimensional stochastic system to a simple ordinary stochastic differential equation, 
which is valid close to the bifurcation. 
\end{abstract}

{\bf Keywords:} finite-time Lyapunov exponents, amplitude equations, bifurcations for SPDEs.\\
{\bf MSC (2020):} 60H15, 60H10, 37H15, 37H20.
\section{Introduction}

The main goal of this work is to 
provide an analysis of the stability
for general SPDEs of the type
\begin{align}\label{spde1}
\begin{cases}
\txtd u = [A u + \nu u +\cF(u)]~\txtd t +\sigma \txtd W_t \\
u(0)=u_{0},
\end{cases}
\end{align}
where $A$ is a linear operator with a one dimensional kernel, the parameter $\nu\in\R$ shifts the spectrum of $A$, $\cF$ is a stable cubic nonlinearity and $\sigma>0$ denotes the intensity of the infinite-dimensional noise, which is given by a Hilbert-space valued Wiener process $(W(t))_{t\in[0,T]}$. 

For the deterministic PDE, i.e.\ for $\sigma=0$, we expect a classical pitchfork-type bifurcation at $\nu=0$, which is easy to verify  in many examples. 
For the stochastic case our analysis relies on finite-time Lyapunov exponents (FTLE), which turned out to be useful to detect bifurcation points for stochastic (partial) differential equations~\cite{R,BlEnNe:21}. 
Finite-time Lyaponov exponents measure the pathwise local expansion rate of nearby solutions.  
This is not limited to statistically invariant solutions, but can be evaluated at any solution of the SPDE.
If the FTLE are positive for a given time and a given solution, then nearby solutions tend to separate on a finite time horizon given by that time. 
In contrast to that 
negative finite-time Lyapunov exponents indicate a pathwise local attraction on a given time-scale.

In our main results we study FTLE for various initial conditions and various values of $\nu$ and $\sigma$.
A key technical idea is to use the reduction via amplitude equations close to a change of stability. 
This allows to reduce the infinite dimensional dynamics to a finite dimensional SDE, which is a center like part, although this finite dimensional space is not invariant for the dynamics of the SPDE. 
A particular example we will study is the solution of the SPDE starting in a suitably chosen initial condition, so that the amplitude equation will evolve according to a statistically invariant solution given by a random attractor.


\paragraph{Main Results.}
We show the following results for FTLE  of~\eqref{spde1} near $\nu=0$, which is the bifurcation point of the deterministic PDE.  Below we refer to $\nu=0$ as the bifurcation point.

\begin{itemize}
    \item[I.] {\em Before the bifurcation, $\nu<0$.}
    The solutions of~\eqref{spde1} are stable for all $\sigma$ with probability one. 
    
    To be more precise, following well-known results we show in Theorem~\ref{l:neg} that FTLEs for all solutions and all times are negative with probability one.

    \item[II.] {\em After the bifurcation, moderate noise strength,   $0<\nu\approx\sigma \ll1$.}
    Here we have instability, in the sense that there is at least one time $T$ and one solution for which the finite-time Lyapunov exponent $ \lambda_T>0 $ is positive with positive probability.
    
    The proof of this statement (Theorem~\ref{thm:main1}) relies on the approximation with the amplitude equation given by the SDE evolving on a slow time scale $T$ given by $\txtd  b =( b -b^3)~\txtd T +\frac{\sigma}{\nu} ~\txtd \beta(T)$, where $(\beta(T))_{T\geq 0}$ is a Brownian motion, and we  consider the SPDE starting in the rescaled random attractor of the amplitude equation.   

   Due to the nature of the approximation result (Theorem~\ref{thm:aprox}), our result is applicable for times $T$ of order $1/\nu$ but for technical reasons not up to $0$.  This means there is a $\nu$ dependent interval, where we can prove that the FTLE is positive and this interval contains values of the type $C/\nu$ for some values of the constant $C>0$. 
    
\item[III.] {\em After the bifurcation, small noise strength, $1\gg \nu \gg \sigma \geq0$.} 
Similar to case II.\ we observe instability for at least one solution and one time. 

This relies on the approximation via the amplitude equation $\txtd b = (b - b^3)~\txtd T$. We show (Theorem~\ref{thm:main2}) that for the solution with initial condition zero, we have a positive FTLE $ \lambda_T>0 $ for times $T$ of order $1/\nu$  with positive probability. But this result should hold true for any solution with a sufficiently small initial condition.

    \item[IV.] {\em At the bifurcation, $0\leq \nu \ll \sigma \ll1$.}
    Here we have
    stability, meaning that $\lambda_T<0$  for all solutions with positive probability. The proof of this statement (Theorem~\ref{thm:main3}) relies on the approximation with 
    the amplitude equation  of the type  $\txtd b = -b^3\txtd T + \txtd \beta(T)$, where $(\beta(T))_{T\geq 0}$ is a Brownian motion.  
\end{itemize}

Let us remark that in II.\ the probability where the finite time Lyapunov exponent is positive might be very small, as the proof relies on trajectories where the random attractor is initially close to $0$. In contrast to that in III.\ and IV.\ the probability is significantly larger and close to one, but we do not quantify this in more detail. 

\begin{remark}
We emphasize that based on the interplay between $\nu$ (the bifurcation parameter) and $\sigma$ (the intensity of the noise) as given above in the cases II.-IV., we can estimate the FTLEs of the SPDE~\eqref{spde1} under the Assumptions~\ref{a},~\ref{f} and~\ref{s} on the coefficients and on a specified time scale.
To this aim we use the order of the approximation of the SPDE with the amplitude equation (Theorem~\ref{thm:aprox}) to control the error term between the linearization of the SPDE and the linearization of the amplitude equation. In all cases we state bounds on the corresponding FTLEs for the SDEs but do not directly estimate the difference 
between the FTLEs for the SPDEs and the SDEs, as the FTLE of the SDE is in general not sufficient to control the linearization of the SPDE.  
\end{remark}

\subsection{Known results}

Lyapunov exponents and amplitude equations are well known tools to study random dynamical systems. We briefly summarize results related to our work.

\paragraph{Lyapunov exponents.}
The famous work of Crauel-Flandoli~\cite{CrFl:98} establishes that the attractor of the SDE
\begin{align}\label{sde} \txtd x = (\alpha x -x^3)~\txtd t+\sigma~\txtd W_t 
\end{align}
is a unique random fixed point independent of the bifurcation parameter $\alpha\in\R$ and of the noise intensity $\sigma>0$, inferring that additive noise destroys the deterministic pitchfork bifurcation. This phenomenon is also referred to as {\em synchronization by noise} and can be read from the top asymptotic Lyapunov-exponent, which is always negative.~An analogous statement holds for the Allen-Cahn SPDE~\cite{C}, whereas further statements on synchronization by noise for order-preserving dynamical systems can be looked up in~\cite{FlGeSch:17}. A recent result which goes beyond order-preservation is available in \cite{GeTs:22}, where the invariant measure is used to determine the Lyapunov exponent in a system of reaction diffusion equations.

However,
Callaway et al.~\cite{C} (based on an earlier work of Rasmussen~\cite{R:10}) challenged this point of view by measuring local stability for trajectories of
the SDE~\eqref{sde} on finite time scales, using finite-time Lyapunov exponents. They proved that, whereas all FTLEs are negative for $ \alpha < 0,$ there is always a positive
probability to observe positive FTLEs for $\alpha>0$, i.e.\ sensitivity of initial conditions for finite time
dynamics; this change of the FTLEs corresponds with a transition from uniform to non-uniform
attractivity of the attractor and a loss of (uniform) hyperbolicity, a main paradigm of bifurcation
theory. In order to prove such statements, a detailed analysis on the location of the random attractor
is required. These results have been extended to the stochastic Allen-Cahn equation in~\cite{BlEnNe:21} using a novel argument for cones invariance.

Our result in contrast to the previously stated result does not require the monotonicity of the SPDE. We carry over  properties of the SDE via amplitude equations to the SPDE.

Finally, we remark that it is a very challenging task to obtain sign information on {\em asymptotic Lyapunov exponents}. In a series of remarkable papers J.~Bedrossian, A.~Blumenthal and S.~Punshon-Smith~\cite{BeBlPs2:22,BeBlPS:21} introduced methods based on the Fisher information for obtaining strictly positive lower bounds on the top Lyapunov exponent of high-dimensional, stochastic differential equations such as the weakly damped Lorenz-96 (L96) model or Galerkin truncations of the 2D Navier-Stokes equations. The positivity of the top Lyapunov exponent has been also obtained for Lagrangian flows in~\cite{BeBlPS:22}.
 We emphasize that our analysis on the Lyapunov exponents is made on a suitable finite-time scale, which is motivated by our applications to bifurcation theory. \\

 An alternative but related problem is to fix the bifurcation parameter $\nu$
and introduce a coefficient $\varepsilon$ in front of the noise and determine for $\varepsilon\ll 1$ the extent to which the random motion resembles
that of the deterministic system, i.e.\ $\varepsilon=0$. For the stochastic Allen-Cahn equation, these situation has been for example investigated in~\cite{BeGe:13, Ba:15}. For the latter, the bulk of initial conditions relax to
one of two stable solutions, while when $\varepsilon>0$ these solutions are metastable, with typical random
trajectories transitioning from the vicinity of one to the other, reflecting the fact that the system
admits a unique stationary measure. Large deviations estimates provide a way of estimating the
typical timescale of these transitions, thereby giving another perspective on the "persistence" of
the bifurcation in the presence of noise.
 
  \paragraph{Amplitude equations.}
  These are a well-known tool to 
study stochastic dynamics for SPDEs close to a bifurcation. There are numerous results on the approximation of SPDEs via amplitude equations mostly treating results with Gaussian noise given by the derivative of a Wiener process, but we comment only on a few of them.
In~\cite{BHP} the authors study large domains, where the dominating pattern is slowly modulated in space and the amplitude equation 
is thus an SPDE of Ginzburg-Landau type. See also \cite{BiDBSch:19} for an example of a fully unbounded domain. Degenerate noise was studied in \cite{BloemkerMohammed} and several other publications. Here the noise is not acting directly on the dominating pattern, but through interaction of nonlinearity and
additive noise additional terms appear in the amplitude equation that  have the potential to stabilize the dynamics of the dominant modes. While all these references treated the case of additive Wiener noise, other types of noise were discussed in the literature.
Multiplicative Wiener noise was studied by \cite{BloemkerFu}, 
while for additive fractional noise with $H>1/2$ certain results 
for Rayleigh-B\'enard convection are available in~\cite{Bl:05}. Recently, fractional noise for $H\in(0,1)$ was considered in~\cite{BlNe:22} and the case of 
$\alpha$-stable L\'evy noise was treated in \cite{DBSY:22}.

While most approximation results treat large but finite time-scales only, the result on 
{\em multiscale expansion of invariant measures} from \cite{BlHa:04} gives an approximation of the invariant measure of a Swift-Hohenberg SPDE in terms of the invariant measure of the Amplitude Equation. This can be used for a result characterizing a P-bifurcation. For the definition and further details on this bifurcation see \cite{Ar:98}.
    
For SPDEs close to but still below the deterministic bifurcation one can observe {\em pattern formation below criticality}. Here the trivial solution is stable without adding noise, but using amplitude equations one can show for the perturbed model that patterns are visible for long times. 
See Section 3.2 of \cite{Bl:07}. This is an indication that noise shifts the deterministic bifurcation point. 
But let us point out that this is not captured by the finite time Lyapunov exponents, as we see in case I.

 \paragraph{Possible Extensions.}
 We plan to treat in a future work SPDEs with quadratic nonlinearities, such as Burger's equation. We expect to generalize the results obtained here for such SPDEs. However the order of the approximation result via amplitude equations will be slightly worse~\cite{Bl:05}. Moreover, we expect that multiplicative noise can be handled in the SPDE if the diffusion coefficient in front of the noise is non-zero, i.e.~$\sigma(u)\neq 0$ for all $u\neq 0$. Then similar results should hold around the bifurcation as presented here. If the   diffusion coefficient is zero, then multiplicative noise will appear in the amplitude equation, and the problem is wide open in that case even for an SDE. Adding higher order terms to the cubic nonlinearity does not change the amplitude equation, so similar results are expected in that case. Also, considering other stable nonlinearities like quintic instead of cubic should only change the technical details of the proof.
Challenging is the case when a two or higher dimensional space changes stability at the bifurcation, as we crucially rely on the properties of the one-dimensional amplitude equation, when we use the random attractor of the SDE or Birkhoff's ergodic theorem.
Even for two dimensional SDEs there is no result for FTLE available yet. 

\paragraph{Structure of the manuscript.} This work is structured as follows. In Section~\ref{assumptipns} we specify the setting of the problem, state the necessary assumptions and collect important properties of finite-time Lyapunov exponents. Section~\ref{sec:stability} is devoted to the most simple case that we consider, i.e.\ $\nu<0$. Here we trivially have stability, meaning that the finite-time Lyapunov exponents around the singleton attractor are always negative with probability one. 
Section~\ref{ly:a} discusses finite-time Lyapunov exponents for the two types of amplitude equations that we obtain. These results will be used later in the main Section~\ref{six}, where we approximate the Lyapunov-exponents of the SPDE~\eqref{spde1} with the ones of the amplitude equations. 
In Section~\ref{six} we derive error bounds for the linearization of the SPDE and of the amplitude equations in several situations regarding the bifurcation parameter $\nu$ and the intensity of the noise $\sigma$. The necessary approximation result of the SPDE via amplitude equations near a change of stability is derived in Section~\ref{five}. 
Finally, we provide in Section~\ref{seven} possible applications of our theory. These include the stochastic Allen-Cahn, Swift-Hohenberg and a surface growth model.

\section{Setting and Assumptions}\label{assumptipns}
We work in the following setting. We let $\cH$ stand for a separable Hilbert space and consider the SPDE
\begin{align}\label{spde}
\begin{cases}
\txtd u = [A u + \nu u +\cF(u)]~\txtd t +\sigma \txtd W_t \\
u(0)=u_{0}\in \cH.
\end{cases}
\end{align}
We make the following standard assumptions on the coefficients.
\begin{assumptions}\label{a} (Differential operator $A$) The linear operator $A$ generates a compact analytic semigroup $(e^{tA})_{t\geq 0}$ on $\cH$. 
 Moreover, it is symmetric and non-positive and has a {\em one-dimensional kernel} which we denote by $\cN$.  
 We define  the orthogonal projection  $P_c$ onto $\cN$, set $P_s=\text{Id}-P_c$ and obtain that $\cH=\cN\oplus \cS$, where $\cS$ stands for the range of $P_s$. 
 The semigroup is exponentially stable on $P_s\cH$ which means that there exists $\mu>0$ such that 
\[ 
\|e^{tA} P_s\|_{\cL(\cH)}\leq e^{-t\mu},~~\text{ for all } t\geq 0. 
\]
We further define the spaces $\cH^\alpha=D((1-A)^\alpha)$ for $\alpha\geq 0$ endowed with the norm $\|\cdot\|_\alpha=\|(1-A)^\alpha\cdot \|$ and scalar product $\langle u, v \rangle_\alpha=\langle (1-A)^\alpha u,(1-A)^\alpha v \rangle$ and set $\cH^{-\alpha}=(\cH^\alpha)^*$ the dual of $\cH^\alpha$. It is well-known that $(e^{tA})_{t\geq 0}$ is an analytic semigroup on $\cH^\alpha$ for every $\alpha\in\mathbb{R}$. Finally, we have that $\cN\subset \cH^\alpha$ for all $\alpha>0$ since  $(1-A)^\alpha\cN=\cN$. 
\end{assumptions}

Under our assumptions we have for some constant $C>0$ depending on $\alpha>0$ that
$\|A^\alpha P_su\| \geq C\|P_su\|$ 
for all $u\in\cH$, which we use frequently.

\begin{assumptions}\label{f}(Nonlinearity)
We assume that there exists a Banach space $X$ such that 
\[
\cH^{\alpha}  \subset X \subset \cH
\]
for $\alpha\in(0,1/2)$ with continuous and dense embeddings.
Moreover, the mapping $\mathcal{F}: X\to X^*\subset \cH^{-\alpha}$
is a stable cubic (i.e.\ trilinear) nonlinearity with
\begin{equation}
\label{e:stableF}
\langle \cF(u)-\cF(v), u-v\rangle 
\leq - c\|u-v\|_X^4  ,\quad \mbox{ for } u,v \in X.
\end{equation} 
\end{assumptions}
Let us remark that we can allow terms like $C\|u-v\|^2$ on the r.h.s.\ of \eqref{e:stableF}, but we can always modify the linear term to remove these terms.

In Section~\ref{seven}, we give several examples that will fit in our abstract setting like Allen-Cahn, Swift-Hohenberg, and a surface growth model.
\begin{assumptions}\label{s}(Stochastic convolution)
We assume that the stochastic convolution
\[ Z(t)=\int_0^t e^{A(t-s)}~\txtd W_s\]
is well-defined and has continuous trajectories in $X$. 
\end{assumptions}

Let us remark that for a trace-class Wiener $W$ process on $\cH$, this assumption is automatically satisfied, since $Z\in C([0,T];\cH^\alpha)$ for $\alpha<1/2$ by the factorization method~\cite{DaPratoZ}. 

\begin{remark}
For $\cF$ we have the following sign condition. For any positive $\delta>0$ there is a constant $C>0$ depending on $\delta$ such that for all $u, z\in X$
\begin{equation}
\label{e:boundF}
\langle\cF(u+z),u\rangle 
\leq -c\|u+z\|^4_X + C\|u+z\|^3_X\|z\| 
\leq - \delta\|u\|^4_X + C_\delta \|z\|_X^4.
\end{equation}

As $\cF$ is trilinear we have that $\cF$ is Fr\'echet-differentiable with 
\[
D\cF(u)[h]= \cF(u,u,h)+\cF(u,h,u)+\cF(h,u,u).
\] 
Moreover, for $u,h\in  X$ we obtain due to~\eqref{e:boundF}
\begin{equation}
    \label{e:sign}
    \langle D\cF(u)h,h\rangle 
= \lim_{t\to 0} \frac1t \langle \cF(u+th)-\cF(u), h\rangle
\leq - \lim_{t\to 0} \frac1{t^2} \|th\|^4
= 0 .
\end{equation}
\end{remark}

\begin{definition}\label{order}
For the $\cO$-notation here we use moments uniformly in time in the space $X$.
This means that a process $M$ is $\cO(f)$ for a term $f$ on an interval $I$ in the space $X$, if for all $p>1$ there is a constant $C_p>0$ such that $\E\sup_{t\in I}\|M(t)\|^p_X \leq C_p f$. 
For time independent quantities we use the similar notation without the supremum in time. 
If the process $M$ and the bound $f$ depends on some small quantity  $\varepsilon>0$ we assume that the constant $C_p$ is independent of $\varepsilon \in (0,\varepsilon_0]$ for some $\varepsilon_0>0$. 
\end{definition}

\begin{assumptions}\label{o:z}
For the stochastic convolution, 
we have for every small $\kappa>0$
\begin{align}\label{scaling:z}P_s Z = \cO(T^{\kappa})\qquad\text{and}\qquad 
P_c Z = P_c W = \cO( T^{1/2})
\end{align}
on any interval $[0,T_0]$ in the space $X$. 
\end{assumptions}
\begin{remark}
These bounds are natural and can be checked in applications. 
See for example~\cite{Bl:07}, where the bound on $P_s$ is obtained by the factorization method~\cite{DaPratoZ}. 
We also remark that the bound by $T^\kappa$ is often not optimal, but sufficient for our applications. 
A sharp bound should have a logarithmic term~\cite{Q}.
\end{remark}


\subsection{Existence of solutions} 
 For SPDEs of our type the existence and uniqueness of solutions is in many examples well known. 
 To verify this there are several options. 
 
 Firstly,
 by fixed-point arguments one shows the local existence of unique solutions in the space $X$. One only needs some results on the semigroup acting on the space $X$, which follows from our assumptions. 
In a similar way
one can use the fact that $\mathcal{F}: \cH^{\alpha} \to  \cH^{-\alpha} $
 and establish local well-posedness in $\cH^{\alpha}$, but only if the stochastic convolution is in that space, too. 
 
 Alternatively,  we can rely on the Galerkin method and the standard transformation $w=u-Z$ that solves the random PDE
 \[
\partial_t w = Aw +\nu w +\cF(w+Z)
 \]
 and we can proceed with classical pathwise existence result.
 See for example \cite{Te:97,Rou:13}.
 
This is based on \eqref{e:boundF}
giving regularity in $L^4(0,T,X)$, together with the compact embedding of $X$ into $\cH^{1/2}$ and Aubin-Lions Lemma.

For initial conditions in $\cH$ and $Z$ being a continuous stochastic process with values in $X$ this shows global 
existence of solutions such that 
\[
u-Z \in L^2(0,T,\cH^{1/2}) \cap  C^0([0,T],\cH)\cap L^4(0,T,X) 
\]
which also implies some regularity of  $\partial_t(u-Z)$ as 
$A(u-Z) \in L^{2}(0,T,\cH^{-1/2})$ and 
$\cF(u) \in L^{4/3}(0,T,X^*)$.

The pathwise uniqueness of solutions follows immediately from \eqref{e:stableF}. For the 
difference $d=u_1-u_2$ of two solutions 
$u_1$ and $u_2$ satisfying
\[
\partial_t d = Ad +\nu d +\cF(u_1)-\cF(u_2)
\]
we only need 
the differentiability of the $\cH$-norm to conclude
\[\partial_t \|d\|^2 = \langle Ad + \nu d +\cF(u_1)-\cF(u_2), d \rangle \leq \nu \|d\|^2.
\]
The differentiability of the norm follows, as  we have \[d=(u_1-Z)-(u_2-Z) \in L^2(0,T,\cH^{1/2}) \cap  L^\infty(0,T,\cH)
\]
by standard parabolic regularity together with $d \in L^4(0,T,X)$ and 
$\cF(u_i) \in L^{4/3}(0,T,X^*)$.

With the arguments sketched above one can prove the following theorem, which we state without proof.
\begin{theorem}
\label{thm:exSPDE}
Let Assumptions~\ref{a},~\ref{f},~\ref{s} be satisfied.
Then for all initial conditions $u_0\in\cH$ there is a unique (up to global null sets)
stochastic process $u$ with continuous paths in $\cH$, 
which is a weak solution of~\eqref{spde}
and satisfies for all $T>0$ 
\[
u-Z \in L^2(0,T,\cH^{1/2}) \cap  C^0([0,T],\cH) 
\cap L^4(0,T,X). 
\]
\end{theorem}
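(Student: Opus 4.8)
The plan is to follow the pathwise strategy indicated before the statement: subtract the stochastic convolution, solve the resulting random PDE for each fixed realization of $Z$ by Galerkin approximation, extract the a priori estimates from the dissipativity of $\cF$, and recover uniqueness directly from the stability condition \eqref{e:stableF}. Throughout one fixes $\omega$ so that $Z(\cdot)=Z(\cdot,\omega)\in C([0,T];X)$, which is legitimate by Assumption~\ref{s}; all estimates are then deterministic and hold for almost every $\omega$, while measurability of the limit follows in the standard way from that of the Galerkin approximations. Writing $w=u-Z$, the process $w$ should solve
\begin{equation*}
\partial_t w = Aw +\nu w +\cF(w+Z),\qquad w(0)=u_0,
\end{equation*}
since $Z(0)=0$, so it suffices to produce a unique $w\in L^2(0,T,\cH^{1/2})\cap C^0([0,T],\cH)\cap L^4(0,T,X)$ and to set $u=w+Z$.

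First I would project the random PDE onto the span of the first $N$ eigenfunctions of $A$, which form an orthonormal basis of $\cH$ since $A$ is symmetric with compact resolvent by Assumption~\ref{a}, obtaining an ODE system with cubic, locally Lipschitz right-hand side whose local solvability is immediate. The key a priori estimate comes from testing the equation for $w_N$ with $w_N$: using the exact identity $\langle Aw_N,w_N\rangle=-\|P_sw_N\|_{1/2}^2+\|P_sw_N\|^2$ and \eqref{e:boundF} in the form $\langle\cF(w_N+Z),w_N\rangle\le-\delta\|w_N\|_X^4+C_\delta\|Z\|_X^4$, one arrives at
\begin{equation*}
\tfrac12\tfrac{\txtd}{\txtd t}\|w_N\|^2+\|P_sw_N\|_{1/2}^2+\delta\|w_N\|_X^4\le(1+\nu)\|w_N\|^2+C_\delta\|Z\|_X^4 .
\end{equation*}
As $\sup_{[0,T]}\|Z\|_X<\infty$ pathwise, Gronwall's inequality gives bounds on $w_N$ uniform in $N$ in $L^\infty(0,T,\cH)$, and, after integrating in time and adding $\|P_cw_N\|_{1/2}^2\lesssim\|w_N\|^2$ to control the one-dimensional kernel part, in $L^2(0,T,\cH^{1/2})$ and $L^4(0,T,X)$. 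Consequently $\cF(w_N+Z)$ is bounded in $L^{4/3}(0,T,X^*)$ and $Aw_N$ in $L^2(0,T,\cH^{-1/2})$, whence $\partial_t w_N$ is uniformly bounded in $L^2(0,T,\cH^{-1/2})+L^{4/3}(0,T,X^*)\hookrightarrow L^{4/3}(0,T,\cH^{-1/2})$.

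The step I expect to be the main obstacle is the passage to the limit in the cubic term $\cF(w_N+Z)$, which, being nonlinear, requires strong rather than merely weak convergence of $w_N$. Here I would invoke the Aubin-Lions lemma along the chain $\cH^{1/2}\hookrightarrow\hookrightarrow X\hookrightarrow\cH^{-1/2}$, whose first embedding is compact because $A$ has compact resolvent: the uniform bounds on $w_N$ in $L^2(0,T,\cH^{1/2})$ and on $\partial_t w_N$ in $L^{4/3}(0,T,\cH^{-1/2})$ yield a subsequence converging strongly in $L^2(0,T,X)$. Along it, the trilinearity of $\cF$ gives the pointwise bound $\|\cF(a)-\cF(b)\|_{X^*}\lesssim(\|a\|_X^2+\|b\|_X^2)\|a-b\|_X$, so that combining the strong $L^2(0,T,X)$ convergence with the uniform $L^4(0,T,X)$ bounds shows $\cF(w_N+Z)\to\cF(w+Z)$ in $L^1(0,T,X^*)$, which is enough to pass to the limit in the weak formulation. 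This identifies $w$ as a weak solution with the asserted regularity; the continuity $w\in C^0([0,T],\cH)$ and the legitimacy of the formal energy identity then follow from $w\in L^2(0,T,\cH^{1/2})\cap L^4(0,T,X)$, $\partial_t w\in L^2(0,T,\cH^{-1/2})+L^{4/3}(0,T,X^*)$ and the generalized Lions-Magenes integration-by-parts lemma in the variational framework of~\cite{Rou:13}.

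Uniqueness is the cleanest part and I would carry it out last, directly on the difference $d=u_1-u_2$ of two solutions sharing the same noise path, so that the $Z$-dependence cancels in $\cF(u_1)-\cF(u_2)$. The regularity just established makes $t\mapsto\|d(t)\|^2$ absolutely continuous, and \eqref{e:stableF} renders the nonlinear contribution non-positive, so
\begin{equation*}
\tfrac12\tfrac{\txtd}{\txtd t}\|d\|^2=\langle Ad+\nu d+\cF(u_1)-\cF(u_2),d\rangle\le\nu\|d\|^2 .
\end{equation*}
Since $d(0)=0$, Gronwall's inequality forces $d\equiv0$. In summary, the genuine work lies in the compactness argument and in justifying the energy identity, whereas the construction of the approximations and the uniqueness estimate are routine given Assumptions~\ref{a}--\ref{s}.
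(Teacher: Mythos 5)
Your proposal is correct and follows essentially the same route the paper sketches before the theorem: the pathwise transformation $w=u-Z$, Galerkin approximation with energy estimates based on \eqref{e:boundF}, compactness via Aubin--Lions to pass to the limit in the cubic term, and uniqueness directly from \eqref{e:stableF} together with the justified differentiability of $t\mapsto\|d(t)\|^2$. The only notable difference is cosmetic: you state the compactness chain in the correct direction, $\cH^{1/2}\hookrightarrow\hookrightarrow X\hookrightarrow\cH^{-1/2}$, whereas the paper's sketch phrases it as ``the compact embedding of $X$ into $\cH^{1/2}$'', which appears to be a slip.
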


Next, we recall some basic facts about  random dynamical system (RDS) without giving too many technical details.
Under our assumptions, we know due to standard results~\cite{C,FlGeSch:17} that the solution operator of~\eqref{spde1} generates a RDS, which is a family of linear maps $\varphi:\R_+\times\Omega\times\cH\to \cH$ satisfying the cocycle property, i.e.\
\[\varphi(t+s,\omega,u_0) =\varphi(t,\theta_s\omega,\varphi(s,\omega,u_0) \text{ for all } s,t\in\R_{+}. \]
Here $\theta_t\omega(s):=\omega(t+s)-\omega(s)$ for $\omega\in\Omega$ and $t,s\in\R$ denotes the Wiener shift, where $(\Omega,\cF,\P)$ stands for the canonical probability space associated to the two-sided Wiener process $(W(t))_{t\in[0,T]}$ and $\omega_t=W_t(\omega)$.
Furthermore the random dynamical system $\varphi$ has a random attractor~\cite{FlGeSch:17, MaSc:04} which is a singleton, i.e.\ there exists a random variable $a:\Omega\to \cH$ such that
\[ \varphi(t,\omega,a(\omega))=a(\theta_t\omega) .\]
In this framework we denote the two-sided filtration on $(\Omega,\cF)$ by $\cF_s^t:=\sigma(W_u-W_v, s\leq u, v\leq t)$ for all $s\leq t$ with $\theta^{-1}_{\tau}\cF^t_s=\cF^{t+\tau}_{s+\tau}$.


\subsection{Finite-time Lyapunov-exponents}

Consider the RDS generated by the solutions $u$ of \eqref{spde}.
The linearization $D_{u_0(\omega)}\varphi(t,\omega,u_0(\omega))=D_{u_0(\omega)} u(t,\omega,u_0(\omega))$ of~\eqref{spde1} around a solution $u(t,\omega,u_0(\omega))$ with initial condition $u_0$ is defined as the solution of  the linear PDE called also the first variation equation, see~\cite{BlEnNe:21}
\begin{align}\label{linearization}
\begin{cases}
    \txtd v =  [Av + \nu v + D \cF(u) v]~\txtd t \\
    v(0)=v_0.
    \end{cases}
\end{align}
\begin{remark}
The Fr\'echet differentiablity of the cocycle follows regarding that $u\in L^2(0,T;\cH^{1/2})$ due to~\cite[Lemma 4.4]{deb}.
\end{remark}
For $t>0$ we denote the random solution operator $U_{u_0}(t):\cH\to\cH$ such that $v(t)=U_{u_0}(t)v_0$, where $v$ is a solution of \eqref{linearization} given the initial condition $v_0\in \cH$.

\begin{remark}
Note that for any solution $u\in L^4(0,T,X)$ we have $\cF(u)\in L^{4/3}(0,T,X^*) \subset L^{4/3}(0,T,{\cH}^{-\alpha})$.
We can now use pathwise deterministic theory for linear PDEs. 
For example Galerkin methods show that for given $v_0\in\cH$ there is an (up to global null sets) unique stochastic process $v$ with continuous paths in $\cH$ and $v \in L^2(0,T;\cH^{1/2})$ for all $T>0$ that solves \eqref{linearization}. 
\end{remark}

We define the finite-time Lyapunov exponent as in~\cite{BlEnNe:21}.

\begin{definition}{\em (Finite-time Lyapunov exponent)}. Let $t>0$ be fixed. We call a finite-time Lyapunov exponent for a solution $u$ of the SPDE with (random) initial condition $u_0$ 
\begin{equation}\label{ftle}
    \lambda_t(u_0):=\lambda(t,\omega,u_0(\omega)) =\frac{1}{t} \ln \left( \| U_{u_0}(t) \|_{\cL(\cH)}\right).
\end{equation}
\end{definition}

From the definition it is clear that finite-time Lyaponov exponents measure local expansion rates of nearby solutions. Negative finite-time Lyapunov exponents indicate attraction whereas positive ones indicate that nearby solutions tend to separate on a finite time horizon.

\begin{remark}
\label{propLyap}
We can compute $\|U_{u_0}\|_{\cL(\cH)}$ as follows
\begin{eqnarray*}
\| U_{u_0}(t) \|_{L(\cH)}
&=& \sup\{ \|v(t)\| / \|v(0)\| \ : \  v \text{ solves \eqref{linearization} with }v(0)\not=0 \} \\
&=& \sup\{ \|v(t)\|  \ : \  v \text{ solves \eqref{linearization} with }\|v(0)\|=1 \}.
\end{eqnarray*}
\end{remark} 

\begin{definition}\label{as:lyexp}
{\em(Asymptotic Lyapunov-exponent)} The asymptotic Lyapunov exponent around $u_0\in\cH$ is defined as $$ \lambda(u_0):=\lim\limits_{t\to\infty} \frac{1}{t} \ln (\| U_{u_0}(t)\|_{\cL(\cH)} ). $$
\end{definition}
\begin{remark}
In contrast to~\eqref{ftle}, due to the ergodic theorem the limit in Definition~\ref{as:lyexp} exists and is deterministic (independent of $\omega$) with probability one. 
Here we need to rely on the ergodicity of a stationary solution, which for instance holds if the random attractor is a single point.  
Estimating this quantity for SPDEs is a highly challenging task, compare~\cite{GeTs:22,BeBlPS:22}.
\end{remark}


\section{Stability for negative $\nu$}\label{sec:stability}

This is the trivial case, which is well known in many examples. See for example~\cite{MaSc:04}.
\begin{lemma}
The SPDE~\eqref{spde} has a global attractor which is a unique random fixed point.
\end{lemma}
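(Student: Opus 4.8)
The plan is to work, exactly as in the existence theory sketched above, with the pathwise random PDE obtained through the transformation $w=u-Z$, namely
\[
\partial_t w = Aw +\nu w +\cF(w+Z),
\]
and to verify the two ingredients standard in the theory of random dynamical systems: a compact random absorbing set (which yields existence of the attractor) and pathwise exponential contraction of any two trajectories (which forces that attractor to be a single random point). Throughout I would use the stationary version of the stochastic convolution $Z$, whose tempered growth as the pullback time tends to $-\infty$ follows from Assumption~\ref{o:z}.

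First I would establish a tempered random absorbing ball in $\cH$. Testing the $w$-equation with $w$, using $\langle Aw,w\rangle\leq 0$ together with the sign condition~\eqref{e:boundF} (with $u=w$, $z=Z$ and a suitable $\delta>0$), gives an energy inequality of the form
\[
\tfrac12\,\partial_t\|w\|^2 \leq \nu\|w\|^2 -\delta\|w\|_X^4 + C_\delta\|Z\|_X^4 .
\]
Since $\nu<0$ and $Z$ has tempered moments, a pullback Gronwall argument produces a tempered bounded absorbing set. The parabolic smoothing of the analytic semigroup and the compact embedding of $X$ into $\cH^{1/2}$ (via Aubin--Lions, precisely as in the existence proof) then upgrade this to a \emph{compact} absorbing set, and the abstract results of~\cite{FlGeSch:17,MaSc:04} yield the existence of a random attractor.

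Second, for the singleton property I would reuse the contraction estimate already derived for uniqueness. For the difference $d=u_1-u_2$ of any two solutions, the monotonicity~\eqref{e:stableF} of $\cF$ and $\langle Ad,d\rangle\leq 0$ give
\[
\tfrac12\,\partial_t\|d\|^2 = \langle Ad+\nu d+\cF(u_1)-\cF(u_2),\,d\rangle \leq \nu\|d\|^2 ,
\]
so that $\|d(t)\|\leq e^{\nu t}\|d(0)\|\to 0$ as $t\to\infty$ whenever $\nu<0$. Hence all trajectories converge pathwise and exponentially fast, the attractor cannot contain more than one point, and it must be a single random fixed point $a(\omega)$ with $\varphi(t,\omega,a(\omega))=a(\theta_t\omega)$. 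The main obstacle is the first part: setting up the pullback absorption rigorously within the RDS framework and ensuring that the absorbing set is both tempered and compact, so that the abstract existence theorems apply. By contrast the singleton conclusion is essentially immediate from the sign of $\nu$ and the stability of $\cF$, which is why this is the \emph{trivial} case.
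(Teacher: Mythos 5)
Your proposal is correct and follows essentially the same route as the paper: the paper's entire proof is your second step --- the contraction estimate $\tfrac12\partial_t\|d\|^2\leq\nu\|d\|^2-c\|d\|_X^4$ obtained from $\langle Ad,d\rangle\leq 0$ and \eqref{e:stableF}, giving $\|d(t)\|\leq e^{\nu t}\|d(0)\|$ and hence a singleton attractor for $\nu<0$ --- with the remaining details deferred to \cite{MaSc:04}. Your first step (the tempered compact absorbing set via the transformation $w=u-Z$) simply fills in the standard existence material that the paper cites away, so the two arguments coincide in substance.
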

\begin{proof}
We only sketch the standard arguments, see~\cite{MaSc:04} for further details. We consider the difference of two solutions $d=u_1-u_2\in L^2(0,T;\cH^{1/2})\cap L^\infty(0,T;\cH)$ which satisfies the equation
\[
\partial_t d = Ad+\nu d +\cF(u_1)-\cF(u_2). 
\]
Using standard energy estimates we obtain using the assumptions on $A$ and~\eqref{e:stableF}
\begin{align*}
    \frac{1}{2}\partial_t\|d\|^2&=\langle Ad, d\rangle +\nu \|d\|^2 +\langle F(u_1)-F(u_2), u_1-u_2\rangle\\
    & \leq \nu \|d\|^2 -c \|d\|^4_X,
\end{align*}
which implies that $\|d(t)\|\leq \|d(0)\| e^{t\nu}$ for all $t>0$. 
Since $\nu<0$ this means that all  solutions converge towards each other yielding the existence of a singleton attractor. 
\qed
\end{proof}

As a consequence of the previous result we have that all finite time Lyapunov exponents are negative. 

\begin{theorem}\label{l:neg}
Let Assumptions~\ref{a},\ref{f},\ref{s} hold true and let $\nu<0$. Furthermore let $u$ be a solution of~\eqref{spde} in the sense of Theorem \ref{thm:exSPDE} with random initial condition $u_0\in\cH$.
Then for all $T>0$ we have with probability one
\[
\mathbb{P}(\lambda_T(u_0)\leq \nu) =1.
\]
\end{theorem}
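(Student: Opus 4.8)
The plan is to estimate the growth of solutions $v$ to the linearization \eqref{linearization} directly via an energy estimate, exploiting the sign condition \eqref{e:sign} on $D\cF$. Specifically, for $v$ solving \eqref{linearization} with $\|v(0)\|=1$, I would compute the evolution of the $\cH$-norm squared. The differentiability of $\|v\|^2$ is justified since $v\in L^2(0,T;\cH^{1/2})\cap C^0([0,T];\cH)$, as noted in the remark preceding the definition of the finite-time Lyapunov exponent.

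First I would write
\begin{align*}
\frac{1}{2}\partial_t\|v\|^2 = \langle Av,v\rangle + \nu\|v\|^2 + \langle D\cF(u)v,v\rangle.
\end{align*}
The first term is nonpositive since $A$ is symmetric and non-positive by Assumption~\ref{a}. The third term is nonpositive by \eqref{e:sign}, which states exactly that $\langle D\cF(u)h,h\rangle\leq 0$ for all $h\in X$; here the relevant path-regularity ($v(t)\in X$ for a.e.\ $t$) must be invoked so the sign condition applies. This leaves
\begin{align*}
\frac{1}{2}\partial_t\|v\|^2 \leq \nu\|v\|^2,
\end{align*}
and Gr\"onwall's inequality gives $\|v(t)\|\leq \|v(0)\|e^{t\nu}$ for all $t>0$.

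Taking the supremum over all $v(0)$ with $\|v(0)\|=1$ and using the characterization of the operator norm in Remark~\ref{propLyap}, I obtain $\|U_{u_0}(t)\|_{\cL(\cH)}\leq e^{t\nu}$. Plugging this into the definition \eqref{ftle} of the finite-time Lyapunov exponent yields
\begin{align*}
\lambda_T(u_0) = \frac{1}{T}\ln\bigl(\|U_{u_0}(T)\|_{\cL(\cH)}\bigr) \leq \frac{1}{T}\ln\bigl(e^{T\nu}\bigr) = \nu,
\end{align*}
which holds pathwise, hence with probability one, establishing the claim.

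The main obstacle is not the computation itself—which mirrors the preceding lemma on the singleton attractor—but ensuring the energy estimate is rigorous at the level of regularity available. The sign condition \eqref{e:sign} is formulated for $h\in X$, so I must be careful that the solution $v$ of the linearization genuinely lies in $X$ for almost every time, and that the pairing $\langle D\cF(u)v,v\rangle$ is well-defined given $D\cF(u)$ maps into $\cH^{-\alpha}$. This is the same subtlety handled for the nonlinear difference equation in the existence theory, where one passes through Galerkin approximations; the cleanest route is to run the estimate on Galerkin truncations (where everything is smooth and the inequalities are elementary) and then pass to the limit, using that the bound $e^{t\nu}$ is uniform and independent of the truncation level. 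Since the crucial inequalities only ever go in the favorable direction, the limit passage preserves them.
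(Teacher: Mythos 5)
Your proposal is correct and coincides with the paper's own proof: the same energy estimate combining the non-positivity of $A$, the sign condition \eqref{e:sign}, and Gr\"onwall's inequality, followed by Remark~\ref{propLyap} and the definition \eqref{ftle} to conclude $\lambda_T(u_0)\leq\nu$ pathwise. Your extra care about regularity (Galerkin truncation and the pairing with $D\cF(u)$) is a sound way to make rigorous what the paper handles by simply citing the regularity $v\in H^1(0,T,\cH^{-1/2})\cap L^2(0,T,\cH^{1/2})\cap C(0,T,\cH)$.
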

\begin{proof}
The proof is similar to~\cite[Proposition 3.1 a)]{BlEnNe:21}. We consider a solution $v$ of the linearized problem~\eqref{linearization} around a solution $u$ of~\eqref{spde} with random $\cH$-valued initial condition $u_0$. 
Recalling that $v\in H^1(0,T,\cH^{-1/2})\cap L^2(0,T,\cH^{1/2})\cap C(0,T,\cH)$ we obtain using \eqref{e:sign} the standard energy estimate
\begin{align*}
    \frac{1}{2}\partial_t \|v\|^2& =\langle A v, v\rangle +\nu \|v\|^2 +\langle D\cF(u)v, v\rangle 
    \leq \nu \|v\|^2.
\end{align*}
This implies that $ \|v(t)\| \leq \| v(0)\|e^{t\nu}$ for all $t>0$. Due to Remark \ref{propLyap}
we have for any time $T>0$
\[
\lambda_T(u_0) =\frac{1}{T}\ln (\|U_{u_0}(T)\|_{L(\cH)} )\leq  \nu 
\]
which finishes the proof.
\qed
\end{proof}


\begin{remark}
Note that the result is still true for $\nu>0$ but there it only gives a positive upper bound for the Lyapunov exponents, which is not relevant.  
\end{remark}

\section{Lyapunov Exponents for  Amplitude Equations}\label{ly:a}

Here we summarize properties of the one-dimensional 
SDE given by the amplitude equation. Let us first start with an important technical lemma about the projected nonlinearity.
\begin{lemma}
The nonlinearity 
\[\mathcal{F}_c(b) :=P_c \mathcal{F}(b) \quad \text{for } b\in\cN \]
is always a stable cubic as a map on $\cN\approx \R$.
\end{lemma}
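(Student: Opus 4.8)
The plan is to exploit the one-dimensionality of $\cN$ in order to reduce the abstract trilinear nonlinearity to a genuine scalar cubic, and then to read off stability directly from Assumption~\ref{f}. First I would fix a unit vector $e\in\cN$ (so $\|e\|=1$) and write every $b\in\cN$ as $b=\beta e$ with $\beta\in\R$, which is precisely the identification $\cN\approx\R$. Since $\cF$ is trilinear, evaluating on the diagonal gives $\cF(b)=\cF(\beta e,\beta e,\beta e)=\beta^3\cF(e,e,e)$, and projecting gives $\cF_c(b)=\beta^3 P_c\cF(e,e,e)$. As $P_c\cF(e,e,e)\in\cN$, there is a scalar $\gamma\in\R$ with $P_c\cF(e,e,e)=\gamma e$, so that under the identification the map becomes the scalar cubic $\cF_c(\beta)=\gamma\beta^3$. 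This already establishes that $\cF_c$ is cubic; it remains to show it is \emph{stable}, i.e.\ that $\gamma<0$.

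To pin down the sign of $\gamma$ I would test the stability estimate~\eqref{e:stableF} against $b$ itself. Taking $v=0$ and using $\cF(0)=0$ (trilinearity), together with the orthogonal splitting $\cH=\cN\oplus\cS$ which forces $\langle P_s\cF(b),b\rangle=0$ for $b\in\cN$, one obtains
\[
\langle \cF_c(b),b\rangle=\langle P_c\cF(b),b\rangle=\langle\cF(b),b\rangle\leq -c\|b\|_X^4 .
\]
Writing the left-hand side as $\gamma\beta^4$ and the right-hand side as $-c\beta^4\|e\|_X^4$, and dividing by $\beta^4>0$, yields $\gamma\leq -c\|e\|_X^4<0$. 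The passage from the $X$-norm to the Euclidean norm on $\cN$ is legitimate precisely because $\cN\subset X$ is finite dimensional (indeed one-dimensional), so that $\|\cdot\|_X$ and $\|\cdot\|$ are equivalent there.

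Finally I would verify the scalar stability inequality. For $\gamma<0$ and $\beta_1,\beta_2\in\R$ the elementary factorization $\beta_1^3-\beta_2^3=(\beta_1-\beta_2)(\beta_1^2+\beta_1\beta_2+\beta_2^2)$ together with the bound $\beta_1^2+\beta_1\beta_2+\beta_2^2\geq\tfrac14(\beta_1-\beta_2)^2$ gives
\[
(\cF_c(\beta_1)-\cF_c(\beta_2))(\beta_1-\beta_2)=\gamma(\beta_1-\beta_2)^2(\beta_1^2+\beta_1\beta_2+\beta_2^2)\leq \tfrac{\gamma}{4}(\beta_1-\beta_2)^4 ,
\]
which is exactly the scalar analogue of~\eqref{e:stableF} with stability constant $|\gamma|/4>0$. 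I do not expect any real obstacle here: the statement is essentially a computation, and the only point requiring a little care is the norm comparison on $\cN$, which is harmless by finite-dimensionality. The rest reduces to trilinearity and a scalar algebraic identity.
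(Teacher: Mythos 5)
Your proof is correct and follows essentially the same route as the paper: write $b=\xi e$ for a unit vector $e$ spanning $\cN$, use trilinearity to reduce $\cF_c$ to a scalar cubic, and read off the negative coefficient by testing the stability estimate~\eqref{e:stableF} at $v=0$ (equivalently \eqref{e:boundF} with $z=0$). The only point you gloss over, which the paper states explicitly, is why $\cN\subset X$ holds at all --- namely $e\in\ker A$ gives $(1-A)^\alpha e=e$, so $e\in\cH^\alpha\subset X$ --- whereas your extra verification of the scalar stable-cubic inequality is fine but not needed once the coefficient is known to be negative.
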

\begin{proof}
Obviously, since $\cN$ is one-dimensional we get that $\cN=\text{span}\{e\}$ for a fixed $e\in\cN$ with $\|e\|=1$. Note that $e$ belongs to the kernel of $A$ and thus $e\in \cH^\alpha$ for all $\alpha>0$, meaning that $e\in X$, due to the embedding $\cH^\alpha\subset X$.
Therefore, writing $b=\xi\cdot e$ and using~\eqref{e:boundF} we get  \[\langle\mathcal{F}_c(b),e\rangle=
 \xi^3\underbrace{\langle\mathcal{F}_c(e),e\rangle}_{\leq-c\|e\|^4_X < 0}.
\]
Thus, if we identify $\cN\approx \R$ the cubic nonlinearity is always a stable cubic $ - C \xi^3$ for a positive constant. Analogously we have for the derivative of $\cF_c$ using~\eqref{e:boundF}
\[\langle D \cF_c(b)e, e\rangle = 3\xi^2 \langle D\cF_c(e)e, e\rangle \leq 0
\]
where equality only holds if $b=0$.
\qed
\end{proof}

\newcommand{\rev}[1]{\textcolor{blue}{#1}}

For the approximation via amplitude equations later, we assume that $\nu\geq 0$ and fix a small quantity $\varepsilon>0$, where we always assume the upper bounds
\[
\nu=\cO(\varepsilon^2) 
\text{ and }
\sigma=\cO(\varepsilon^2). 
\]

Using the nonlinearity $\mathcal{F}_c$ and depending on the order of $\nu$ and $\sigma$ we obtain in Section~\ref{six} 
amplitude equations of two different types. 
Mostly, we have 
 \begin{align}\label{a1}
\txtd b=(b+\cF_c(b))~\txtd T + (\sigma/\nu) ~\txtd \beta_{\sqrt{\nu}}(T)
\end{align}
and for $\nu\ll\sigma$   
\begin{align}\label{a2}
\txtd b=\cF_c(b)~\txtd T+\txtd\beta_{\sqrt{\sigma}}(T),
\end{align}
where $\beta_{c}$ is an $\cN$-valued Brownian motion that is rescaled in time by $\gamma$, i.e.\ $\beta_\gamma(T)=\gamma\beta(T/\gamma^2)$.

By standard theory~\cite{CrFl:98}, due to monotonicity, these SDEs have a random attractor $\{a(\omega)\}_{\omega\in\Omega}$
which is a single fixed point. Our aim is to analyze the finite-time Lyapunov exponents around this random fixed point.

 \begin{remark}\label{fp}
 We know that $\E \|a\|^p_{\cN}<\infty$ for all $p\geq 1$, which means that $a=\cO(1)$. More precisely, using the Fokker-Planck equation one can show that the Markov semigroup associated to~\eqref{a1} has an invariant measure with density (~\cite[p.~474]{Ar:98})
 \[ p(x)=\frac{1}{N} \exp \Big(\frac{\nu^2}{\sigma^2} \Big(x^2 - \frac{1}{2}x^4 \Big) \Big), \]
 where $N$ is a normalization constant. Analogously, the density for~\eqref{a2} reads as
 \[  p(x)=\frac{1}{N} \exp \Big( -\frac{1}{2}x^4\Big) .\] 
 Moreover using that $\{a(\omega)\}_{\omega\in\Omega}$ is the random fixed point of~\eqref{a1} respectively~\eqref{a2}, we can infer that $a(\theta_T\omega)=\cO(1)$ for $T\in[0,T_0]$, see also the computations below.
 \end{remark}

We now provide results for the finite-time Lyapunov exponents of these SDEs around the random attractor $a(\omega)$.

 \begin{lemma}\label{l1}{\em(Positive finite-time Lyapunov-exponent for~\eqref{a1})} There exists a constant $c>0$ independent of the choice of the Brownian motion and a set $\Omega_0\in \cF^{T}_{-\infty}$ with $\mathbb{P}(\Omega_0)\geq c>0$ such that $\lambda_T(a(\omega))\geq \frac14$. 
 \end{lemma}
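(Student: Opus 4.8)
The plan is to exploit that \eqref{a1} is a scalar equation, so that its linearization is explicit. Write $b(s)=a(\theta_s\omega)$ for the stationary trajectory through the random fixed point, and recall from the previous lemma that on $\cN\approx\R$ we have $\cF_c(b)=-Cb^3$ for some constant $C>0$, hence $D\cF_c(b)=-3Cb^2$. Then the first variation equation along $b$ is the scalar linear equation $\txtd v=(1+D\cF_c(b))\,v\,\txtd T=(1-3Cb^2)\,v\,\txtd T$, so its solution operator satisfies $\|U_{a(\omega)}(T)\|_{\cL(\cN)}=\exp\big(\int_0^T(1-3Cb(s)^2)\,\txtd s\big)$ and therefore
\[
\lambda_T(a(\omega))=\frac1T\int_0^T\big(1-3Cb(s)^2\big)\,\txtd s .
\]
Fixing $\rho_0:=\tfrac1{2\sqrt C}$, so that $3C\rho_0^2=\tfrac34$, we immediately get the inclusion
\[
\{\,|a(\theta_s\omega)|\le\rho_0\ \text{ for all } s\in[0,T]\,\}\subseteq\{\lambda_T(a(\omega))\ge\tfrac14\}.
\]
Thus the whole problem reduces to showing that the stationary trajectory stays inside the small ball $[-\rho_0,\rho_0]$ on $[0,T]$ with probability bounded below by a positive constant.

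To produce such an event I would split according to the initial value and the driving noise, which are independent because $a(\omega)$ is $\cF^0_{-\infty}$-measurable while the increments of $\beta_{\sqrt\nu}$ on $[0,T]$ are $\cF^T_0$-measurable. First set $A:=\{\,|a(\omega)|<\tfrac12\rho_0 e^{-2T}\,\}$; by Remark~\ref{fp} the law of $a(\omega)$ has the strictly positive density $p$, so $\P(A)=\int_{|x|<\rho_0 e^{-2T}/2}p(x)\,\txtd x=:p_1>0$. Second, writing $\rho:=\sigma/\nu$ and letting $\tilde\beta:=\beta_{\sqrt\nu}$ (which is a standard Brownian motion after the time rescaling), set
\[
B:=\Big\{\sup_{s\in[0,T]}|\rho\,\tilde\beta(s)|\le\delta\Big\},\qquad \delta:=\tfrac14\rho_0 e^{-2T},
\]
which has $\P(B)=:p_2>0$ by the small-ball property of Brownian motion. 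I then take $\Omega_0:=A\cap B\in\cF^T_{-\infty}$, so that $\P(\Omega_0)=p_1p_2=:c>0$ by independence.

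It remains to verify that on $\Omega_0$ the trajectory really stays in $[-\rho_0,\rho_0]$, and this is where the main obstacle lies: the linear part of the drift is $+b$, so the origin is \emph{repelling} and a trajectory starting near $0$ is pushed outward at exponential rate; one must show the noise cannot accelerate this escape too much. I would control it with a pathwise Gronwall/continuity argument. Subtracting the noise, $x:=b-\rho\tilde\beta$ solves the random ODE $\dot x=b-Cb^3$ with $x(0)=a(\omega)$, and as long as $|b|\le\rho_0$ one has $|b-Cb^3|\le(1+C\rho_0^2)|b|\le 2|b|\le 2(|x|+\delta)$ on $B$; Gronwall then yields $|b(s)|\le e^{2T}(|a(\omega)|+2\delta)<\rho_0$ for all $s\in[0,T]$ on $A\cap B$, the last strict inequality being exactly why the factors $e^{-2T}$ appear in the definitions of $A$ and $B$. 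Turning this into a stopping-time argument at $\tau_{\rho_0}:=\inf\{s:|b(s)|=\rho_0\}$ shows $\tau_{\rho_0}>T$, hence $|b|\le\rho_0$ throughout and $\lambda_T(a(\omega))\ge\tfrac14$ on $\Omega_0$.

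Finally, concerning the uniformity, the laws of $a(\omega)$ and of $\sup_{s\in[0,T]}|\tilde\beta(s)|$ depend only on the coefficients of \eqref{a1} and not on the particular realization of the driving Brownian motion; in the regime $\sigma\approx\nu$ relevant here, $\rho=\sigma/\nu$ is of order one, so both $p_1$ and $p_2$ are bounded below and the constant $c$ can be chosen independently of the choice of the Brownian motion, as claimed. Note that $c$ is typically very small, since $A$ forces the random fixed point to be exponentially (in $T$) close to $0$, which matches the remark in the introduction that in case II.\ the probability of a positive FTLE may be tiny.
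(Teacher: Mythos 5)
Your proposal is correct and takes essentially the same approach as the paper: you intersect an event forcing the random fixed point $a(\omega)$ to start near zero (measurable with respect to $\cF^0_{-\infty}$) with an independent event keeping the rescaled, noise-scaled Brownian motion small on $[0,T]$ (measurable with respect to $\cF^T_0$), use a pathwise Gronwall argument to keep the stationary trajectory in a small ball, and then read off $\lambda_T(a(\omega))\geq \frac14$ from the explicit scalar linearization $1+D\cF_c$. The only blemish is that your Gronwall step really gives $|x(s)|\leq \left(|a(\omega)|+2\delta s\right)e^{2s}$, so $\delta$ should carry an extra factor of order $1/(1+T)$; this harmless constant-tracking looseness is of the same kind as in the paper's own proof and does not affect the conclusion.
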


 
 
 \begin{proof}
 Recalling that $a(\omega)$ is the random attractor of~\eqref{a1}, we define 
 $$
 A_1:=\{\omega\in\Omega: a(\omega)\in(-\eta,\eta)\} \in \cF^0_{-\infty},
 $$ 
 where $\eta:=\frac{\delta}{2(1+\frac{\sigma}{\nu}) }$, with $\delta>0$ to be chosen later
and $$A_2:=\{ \omega\in\Omega : \sup\limits_{t\in[0,T]} |\beta_{\sqrt{\nu}}(t)|\leq \frac{\eta}{2} \}\in \cF^T_0.$$
We know that $\P(A_1)>0$ due to the fact that $a(\omega)$ is distributed according to an invariant measure which is supported on the real line. Furthermore, $\P(A_2)>0$ since in our case $\varepsilon=\sqrt{\nu}$, so $\beta_{\sqrt{\nu}}(T)=\varepsilon\beta(T\varepsilon^{-2})$ is just a rescaled Brownian motion.
We define  $\Omega_0:=A_1\cap A_2\in \cF^{T}_{-\infty}$.

Since $A_1$ and $A_2$ are independent,
in order to give a uniform lower bound for the probability of $\Omega_0$, we can consider the set separately. First for $A_2$ we apply Doob's inequality for every $p\geq 1$, which yields
\[
\P \Big(\sup\limits_{t\in[0,T]} |\beta_{\sqrt{\nu}}(t)|>\frac{\eta}{2}\Big) \leq C_p  \Big(\frac{2}{\eta}\Big)^p T, 
\]
for some constant $C_p>0$. For $A_1$ we can simply rely on the Fokker-Planck equation for the law of $a$ to get a uniform lower bound, recall Remark~\ref{fp}.

Let us now turn to the lower bound on the Lyapunov exponent.
Using the equation~\eqref{a1} and regarding that $\cF_c$ is a stable cubic, one can verify, similar to~\cite{R}, that for $\omega\in\Omega_0$
\[ |a(\theta_T\omega)| \leq (1+\frac{\sigma}{\nu}  ) \eta e^T<\delta \text{ for all } T\in[0,T_0].
\]
This further implies that the finite-time Lyapunov exponent of~\eqref{a1} is positive with positive probability, since for $\omega\in\Omega_0$ we have
\[  \lambda_T(a(\omega))  =\frac{1}{T} \ln\Big( \exp \Big(T +\int_0^T D\cF_c(a(\theta_t\omega))~\txtd t\Big)\Big) \geq \frac{1}{4}, \]
choosing $\delta:=\frac{1}{2}$.
\qed
\end{proof}

Now let us turn to the negative finite-time Lyapunov-exponent for~\eqref{a2}). Here we need a quantitative result of Birkhoff type.

\begin{lemma}\label{birkhoff}
  For all probabilities $p\in(0,1)$ there exists a time $T^*>0$ and a set $\tilde{\Omega}_p$ with $\P(\tilde{\Omega}_p)\geq1-p$, such that for all $T\geq T^*$ and $\omega\in\tilde{\Omega}_p$ it holds that
    \[\frac1T \int_0^T a(\theta_s\omega)^2 ~\txtd s \geq \frac14 \E a(\omega)^2. \] 
\end{lemma}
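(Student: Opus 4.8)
\section*{Proof proposal}

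The plan is to recognize the time average as a Birkhoff average for the ergodic metric dynamical system driving the SDE~\eqref{a2}, and then to upgrade the resulting almost sure convergence to the claimed quantitative statement that is uniform in $T\geq T^*$. First I would set $f(\omega):=a(\omega)^2$ and record that $f\in L^1(\Omega,\P)$; indeed $f\in L^p$ for every $p\geq1$ by Remark~\ref{fp}, so that $m:=\E f=\E a(\omega)^2$ is a finite deterministic constant, and $m>0$ because the invariant density $p(x)=N^{-1}\exp(-\tfrac12 x^4)$ has full support. Since $a(\theta_s\omega)$ is exactly the stationary solution of~\eqref{a2} obtained from the random fixed point, the integrand satisfies $a(\theta_s\omega)^2=f(\theta_s\omega)$, so that
\[
X_T(\omega):=\frac1T\int_0^T a(\theta_s\omega)^2~\txtd s=\frac1T\int_0^T f(\theta_s\omega)~\txtd s
\]
is a genuine Birkhoff average along the Wiener shift $(\theta_s)_{s\in\R}$.

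Next I would invoke ergodicity. The metric dynamical system $(\Omega,\cF,\P,(\theta_s)_{s\in\R})$ associated with the two-sided Wiener process is measure preserving and ergodic, so the continuous-time Birkhoff ergodic theorem applies to $f\in L^1$ and yields that $X_T(\omega)\to m$ as $T\to\infty$ for $\P$-almost every $\omega$. Measurability and finiteness of the integrals are guaranteed by the continuity of the paths of $a(\theta_\cdot\omega)$ together with $f\in L^1$.

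The only remaining point is the uniformity over all $T\geq T^*$ demanded by the statement, which I would handle by a monotone-tail argument. Define $g_{T^*}(\omega):=\inf_{T\geq T^*}X_T(\omega)$. The map $T^*\mapsto g_{T^*}$ is non-decreasing and, by the almost sure convergence above, $g_{T^*}(\omega)\nearrow\liminf_{T\to\infty}X_T(\omega)=m$ for $\P$-almost every $\omega$. Since $m>0$ is deterministic and $\tfrac14 m<m$, the events $\{g_{T^*}\geq\tfrac14 m\}$ increase in $T^*$ to a set of full probability, so by continuity of $\P$ from below, for any given $p\in(0,1)$ there is $T^*>0$ with $\P(g_{T^*}\geq\tfrac14 m)\geq 1-p$. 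Setting $\tilde\Omega_p:=\{g_{T^*}\geq\tfrac14 m\}$ then gives, for every $\omega\in\tilde\Omega_p$ and every $T\geq T^*$,
\[
\frac1T\int_0^T a(\theta_s\omega)^2~\txtd s=X_T(\omega)\geq g_{T^*}(\omega)\geq\tfrac14 m=\tfrac14\,\E a(\omega)^2,
\]
as required.

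The main obstacle is not the convergence itself but this passage from pointwise almost sure convergence to a single deterministic time $T^*$ that works simultaneously for all larger $T$ on a set of controlled probability. The generous factor $\tfrac14$ is precisely what makes the monotone-tail argument robust, so that no rate of convergence in Birkhoff's theorem is needed; any fixed fraction strictly below $1$ would work equally well.
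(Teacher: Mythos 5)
Your proof is correct, but it takes a genuinely different route from the paper's. The paper deliberately sets aside the pointwise Birkhoff statement, on the grounds that the random time after which the average is close to its mean depends on $\omega$; instead it proves moment bounds that are uniform in $T$, namely $\E\bigl|\frac1T\int_0^T [a(\theta_s\omega)^2-\E a(\omega)^2]~\txtd s\bigr|^{k}\leq \E |a(\omega)^2-\E a(\omega)^2|^{k}$ for even $k$, via H\"older's inequality and the stationarity of $a(\theta_s\omega)$; from this it deduces uniform integrability, hence convergence of the centered averages to $0$ in every $L^p$, and finally convergence in probability, choosing $\tilde\delta=\frac34\E a(\omega)^2$ to select $T^*$ and the set $\tilde\Omega_p$. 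You instead keep the almost sure convergence from Birkhoff's theorem and upgrade it by a monotone-tail (Egorov-type) argument: the infima $g_{T^*}=\inf_{T\geq T^*}X_T$ increase almost surely to $\E a(\omega)^2$, so continuity of $\P$ from below produces one deterministic $T^*$ and one set of probability at least $1-p$ that works for \emph{all} $T\geq T^*$ simultaneously. Your argument is more elementary (no moment estimates at all), and it actually addresses the uniformity in $T$ asserted by the lemma more directly: the paper's final convergence-in-probability step, read literally, yields for each fixed $T$ a set that may depend on $T$, and an argument like yours is what merges these into a single $\tilde\Omega_p$. What the paper's route buys in exchange is quantitative structure: the uniform-in-$T$ moment bounds and the $L^p$ convergence are reusable ingredients and a natural first step toward explicit control of $T^*$, whereas your argument (and, in fairness, also the paper's last step, which still invokes dominated convergence) remains purely qualitative about how large $T^*$ must be. One small point to make explicit in your write-up is the measurability of $g_{T^*}$, which follows since $T\mapsto X_T$ is continuous, so the infimum may be taken over rational $T\geq T^*$.
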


  We can apply this result in our setting since we will be interested in time scales of order between $1/\sqrt{\nu}$ and $1/\nu$.

\begin{proof}
We split the proof into the following steps.

\paragraph{First step:}  Using Birkhoff's ergodic theorem
     we obtain
     \[ \lim\limits_{T\to\infty} \frac{1}{T} \int_0^T a(\theta_s\omega)^2~\txtd s =\E a(\omega)^2, \]
which means that
\[ \Big| \frac{1}{T} \int_0^T a(\theta_s\omega)^2~\txtd s -  \E a(\omega)^2 \Big| \to 0 \text { as } T\to\infty .\]
This statement is not enough for our aims, since the quantity above can only be made arbitrarily small for all $T\geq T^*(\omega)$, i.e.\ after a random time. But we need precise quantitative bounds on the time scale and of course this shall not depend on $\omega$.

     
\paragraph{Second Step:} We can show that for even integers $k$ we have  uniformly in $T$ 
\[ \E \Big|\frac1T \int_0^T [a^2(\theta_s\omega)- \E a(\omega)^2] ~\txtd s \Big|^{k}  \leq \E |[a^2(\omega)-\E a(\omega)^2]|^k. 
\]
This follows using H\"older's inequality combined with the fact that the random attractor $\{a(\omega)\}_{\omega\in\Omega}$ is the unique stationary solution of~\eqref{a2}. In particular this means that 
$\E|a(\theta_s\omega)|^{2k}=\E |a(\omega)|^{2k}$ 
for all $s\in[0,T]$. 
More precisely, H\"older's inequality gives us for $\ell\in\N$
\begin{align*} \E\Big|\int_0^T [a(\theta_s\omega)^2 -\E a(\omega)^2]~\txtd s \Big|^{2\ell} 
&\leq \int_0^T \E| [a(\theta_s\omega)^2 -\E a(\omega)^2] |^{2\ell}~\txtd s~ T^{2\ell-1} \\
& = T^{2\ell} \E  |[a(\omega)^2 -\E a(\omega)^2]|^{2\ell},\qquad s\in[0,T],
\end{align*}
due to the stationarity of $a$. Dividing by $T^{2\ell}$ proves the statement.

\paragraph{Third Step:} The convergence in Birkhoff's ergodic theorem holds in $L^p$ for all $p<k$. The previous moment bounds imply the uniform integrability of 
\[
X_T: =\frac{1}{T}\int_0^T a(\theta_s\omega)^2~\txtd s - \E a(\omega)^2.\]
We have thus for all $C>0$
\[
\E |X_T|^{p} = \E \chi_{\{|X_T|\leq C \}}|X_T|^{p} +  \E \chi_{\{|X_T|> C \}}|X_T|^{p} .
\]
The first term obviously converges to $0$ by dominated convergence, 
the second term is bounded by 
$C^{-p}\E X_T^{2p}$  
using Cauchy-Schwarz and Chebychev's inequality. 
Thus by the second step we have a bound of order $\cO(C^{-p})$ uniformly in $T$. Hence, for all $p>1$ we have 
$\E |X_T|^{p} \to 0$ for $T\to\infty$.

\paragraph{Fourth step:} In conclusion we obtain that for all  $\tilde{\delta}>0$ that 
\[ \mathbb{P} \Big(  \Big|\frac1T \int_0^T a(\theta_s\omega)^2 ~\txtd s  - \E a(\omega)^2 \Big| \leq \tilde{\delta}  \Big)  \to 1 \quad\text{ for } T\to \infty. 
\]
For $T\geq T^*(p)$ and $\tilde\delta:= \frac34 \E a(\omega)^2$ 
we find a set $\tilde\Omega_p$ with measure larger than $1-p$, such that 
\[\frac1T \int_0^T a(\theta_s\omega)^2 ~\txtd s \geq \frac14 \E a(\omega)^2. \] 
\qed
\end{proof}

\begin{lemma}\label{l2}{ \em(Negative finite-time Lyapunov-exponent for~\eqref{a2})} The finite-time Lyapunov-exponents around the attractor of~\eqref{a2} are negative with positive probability.
\end{lemma}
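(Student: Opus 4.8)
The plan is to exploit that, because the kernel $\cN$ is one–dimensional, the first variation equation for~\eqref{a2} around the stationary solution $a(\theta_T\omega)$ is a scalar linear ODE that can be integrated explicitly. Writing $\cN=\mathrm{span}\{e\}$ with $\|e\|=1$ and identifying $\cN\approx\R$, the linearization reads
\[
\txtd v = \langle D\cF_c(a(\theta_T\omega))e,e\rangle\, v~\txtd T,
\qquad
v(T)=v(0)\exp\Big(\int_0^T \langle D\cF_c(a(\theta_s\omega))e,e\rangle~\txtd s\Big).
\]
Since the solution operator $U_{a(\omega)}(T)$ is then multiplication by a positive scalar, its operator norm equals that scalar, and the finite–time Lyapunov exponent~\eqref{ftle} reduces to
\[
\lambda_T(a(\omega))=\frac1T\int_0^T \langle D\cF_c(a(\theta_s\omega))e,e\rangle~\txtd s .
\]
Note that, in contrast to~\eqref{a1}, there is no linear drift term, so no additive $T$ appears in the exponent.

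Next I would insert the explicit form of the projected derivative. By the derivative computation in the lemma on $\cF_c$ there is a constant $\gamma>0$ such that $\langle D\cF_c(a(\theta_s\omega))e,e\rangle=-\gamma\,a(\theta_s\omega)^2$, the strict negativity of $\langle D\cF_c(e)e,e\rangle$ away from $0$ ensuring $\gamma>0$. Hence
\[
\lambda_T(a(\omega))=-\frac{\gamma}{T}\int_0^T a(\theta_s\omega)^2~\txtd s\le 0,
\]
which already yields non-positivity for every $\omega$ and $T$. To upgrade this to a strictly negative bound I would invoke the quantitative Birkhoff estimate of Lemma~\ref{birkhoff}: for any $p\in(0,1)$ there exist $T^*>0$ and a set $\tilde\Omega_p$ with $\P(\tilde\Omega_p)\ge 1-p$ such that $\frac1T\int_0^T a(\theta_s\omega)^2~\txtd s\ge\frac14\E a(\omega)^2$ for all $T\ge T^*$ and $\omega\in\tilde\Omega_p$. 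Combining the two displays gives
\[
\lambda_T(a(\omega))\le-\frac{\gamma}{4}\,\E a(\omega)^2<0
\]
on $\tilde\Omega_p$ for $T\ge T^*$, the strict inequality following since $\E a(\omega)^2>0$ by the nondegenerate stationary density $p(x)=\frac1N\exp(-\tfrac12 x^4)$ recorded in Remark~\ref{fp}.

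The conclusion is then that $\lambda_T(a(\omega))<0$ on a set of probability at least $1-p$, i.e.\ the FTLE is negative with positive probability (indeed with probability arbitrarily close to one). The genuinely trivial part is the non-positivity, immediate from $\langle D\cF_c(\cdot)e,e\rangle\le 0$; the real content — and the only place I expect an obstacle — is producing a bound that is simultaneously strict, quantitative, and uniform in $\omega$ over a prescribed time window, rather than merely valid beyond a random time $T^*(\omega)$ as raw Birkhoff convergence would give. That uniformity is exactly what Lemma~\ref{birkhoff} supplies, so once it is invoked the argument closes without further difficulty. The only points worth double-checking are that the stationary measure of~\eqref{a2} indeed has $\E a^2>0$ and that the admissible range $T\ge T^*$ is compatible with the time scales (between $1/\sqrt{\nu}$ and $1/\nu$) on which the later approximation results are applied.
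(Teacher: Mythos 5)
Your proposal is correct and follows essentially the same route as the paper: linearize~\eqref{a2} around the stationary solution so that the FTLE becomes the time average $\frac1T\int_0^T D\cF_c(a(\theta_s\omega))\,\txtd s$, then combine the sign structure of the cubic with the quantitative Birkhoff estimate of Lemma~\ref{birkhoff} to get a strictly negative bound on a set of probability at least $1-p$ for $T\geq T^*$. In fact you make explicit the one step the paper leaves implicit, namely the identity $\langle D\cF_c(a)e,e\rangle=-\gamma\,a^2$ with $\gamma>0$, which is exactly the bridge needed to convert the Birkhoff bound on $\int a^2$ into a bound on the exponent.
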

\begin{proof}
The linearization of~\eqref{a2} around the random atractor $\{a(\omega)\}_{\omega\in\Omega}$ entails in this case
\[ \txtd b =D\cF_c(a)~\txtd t. \]
Due to~\eqref{e:sign} and  Lemma~\ref{birkhoff} we obtain the existence of a time $T^*>0$, of a set $\Omega_{T^*}$ with probability almost one and of a constant $c>0$ such that for all $T\geq T^*$
\[
\lambda_T(a(\omega)) =\frac{1}{T} \ln \exp \Big( \int_0^T D\cF_c(a(\theta_t\omega )~\txtd t\Big) < -c<0~\quad \text{ for }\omega\in\Omega_{T^*}.
\]
\qed
\end{proof}

\section{Approximation of the SPDE via Amplitude Equations}\label{five}

We recall here briefly the steps of the approximation of~\eqref{spde1} via amplitude equations. Note that we work in a slightly different setting than the results in the literature~\cite{BlHa:04,Bl:05,Bl:07}. 

We consider $\nu\ \geq 0 $ and fix $\varepsilon\in(0,\varepsilon_0]$ for some
$\varepsilon_0 >0$ sufficiently small. 
\begin{assumptions}\label{ordnung} 
In our setting we make the following assumptions
\[
\nu =\cO(\varepsilon^2) \text{ and } \sigma=\cO(\varepsilon^2).
\]
\end{assumptions}
Recall that this does not exclude $\nu$ or $\sigma$ to be much smaller that $\varepsilon^2$.

\begin{ansatz}
  The process $b$ is an $\cN$-valued process and solves the amplitude equation 
\begin{equation}
    \label{e:AE}
    \txtd b =( \nu\varepsilon^{-2}  b + \mathcal{F}_c(b) )~\txtd T+ \sigma\varepsilon^{-2} \txtd \beta_\varepsilon(T).
\end{equation}  
Recall that $\beta_\varepsilon$ is a rescaled Brownian motion defined by 
$\beta_\varepsilon(T)=\varepsilon \beta(T\varepsilon^{-2})$ where $\beta=P_cW$.
\end{ansatz}




In the following, given our $\varepsilon>0$ we fix a time $T_\varepsilon=T_0 \varepsilon^{-2}$ and consider $T_0$ as a constant.

\begin{theorem}\label{thm:aprox}
Let $u$ be a solution of the SPDE~\eqref{spde} with initial condition $u_0=\cO(\varepsilon)$ in $\cH$
such that $P_su_0=\cO(\varepsilon^2)$  in $\cH$. Further, let $b$ be a solution of\eqref{e:AE} with $b(0)- \varepsilon^{-1} u_0 = \cO(\varepsilon)$. 
Then 
\[u-\varepsilon b(\varepsilon^2\cdot) = \cO(\varepsilon^{2-\kappa})
\quad\text{ on }[0,T_\varepsilon]\text{ in }\cH \text{ for all small } \kappa>0. 
\]
\end{theorem}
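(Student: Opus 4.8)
The plan is to compare the solution $u$ directly with the rescaled amplitude $\psi(t):=\varepsilon b(\varepsilon^2 t)$, after splitting everything into the center part $u_c=P_cu\in\cN$ and the stable part $u_s=P_su\in\cS$. A short computation using the time rescaling $T=\varepsilon^2 t$ turns the amplitude equation~\eqref{e:AE} into
\[
\txtd\psi=[\nu\psi+P_c\cF(\psi)]~\txtd t+\sigma~\txtd P_cW_t,
\]
where I use that $\cF$ is trilinear, so $P_c\cF(\psi)=\varepsilon^3\cF_c(b)$, and that the rescaled Brownian motion is built precisely so that $\varepsilon\sigma\varepsilon^{-2}\,\txtd\beta_\varepsilon(\varepsilon^2 t)=\sigma\,\txtd P_cW_t$. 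Since $P_cA=0$, the center part obeys $\txtd u_c=[\nu u_c+P_c\cF(u)]~\txtd t+\sigma\,\txtd P_cW_t$ with the identical noise. Hence the center error $r_c:=u_c-\psi$ solves the pathwise random ODE
\[
\txtd r_c=[\nu r_c+P_c(\cF(u)-\cF(\psi))]~\txtd t,
\]
with no stochastic integral; this cancellation of the dominant noise is the structural heart of the argument.

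First I would establish the a priori bound $u=\cO(\varepsilon)$ on $[0,T_\varepsilon]$ in $\cH$ (and in $X$), together with $b=\cO(1)$ on $[0,T_0]$ from Remark~\ref{fp}, so that $\psi=\cO(\varepsilon)$. The bound on $u$ is obtained by the transformation $w=u-\sigma Z$, energy estimates exploiting the stable-cubic dissipativity~\eqref{e:boundF}, and the noise scaling of Assumption~\ref{o:z}, noting that on the long horizon $\sigma P_cW$ contributes $\sigma\cdot\cO(T_\varepsilon^{1/2})=\cO(\varepsilon)$, which matches the center amplitude. I expect this uniform-in-$\varepsilon$ bound over $[0,T_\varepsilon]=[0,T_0\varepsilon^{-2}]$, closed in the moment sense of Definition~\ref{order} via a stopping-time/bootstrap argument, to be the \emph{main obstacle}. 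Given it, the stable part is controlled through its mild formulation: the decay $\|e^{tA}P_s\|_{\cL(\cH)}\le e^{-t\mu}$ absorbs the initial datum $P_su_0=\cO(\varepsilon^2)$, the drift $\nu u_s+P_s\cF(u)=\cO(\varepsilon^3)$ integrates against the analytic semigroup to $\cO(\varepsilon^3)$, and the noise $\sigma P_sZ=\cO(\varepsilon^{2-\kappa})$ (the long-interval bound $\cO(T_\varepsilon^\kappa)$ after renaming $\kappa$) dominates, yielding $u_s=\cO(\varepsilon^{2-\kappa})$.

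For the center error I would run a Gronwall estimate on the long interval. Writing $u-\psi=r_c+u_s$ and expanding $\cF(u)-\cF(\psi)$ by trilinearity, the term linear in $r_c$ has coefficient of size $\|\psi\|^2=\cO(\varepsilon^2)$, and is in fact nonpositive after pairing with $r_c$ by the sign condition~\eqref{e:sign}; the contribution of $u_s$ is of order $\cO(\varepsilon^2)\|u_s\|=\cO(\varepsilon^{4-\kappa})$, and the quadratic and cubic remainders in $r_c$ are of still higher order under the a priori smallness. This gives
\[
\tfrac{\txtd}{\txtd t}\|r_c\|\le (\nu+C\varepsilon^2)\|r_c\|+C\varepsilon^{4-\kappa}.
\]
Since $(\nu+C\varepsilon^2)T_\varepsilon=\cO(1)$ the exponential prefactor stays bounded, while the forcing integrates to $C\varepsilon^{4-\kappa}\cdot T_\varepsilon=\cO(\varepsilon^{2-\kappa})$; together with $r_c(0)=P_cu_0-\varepsilon b(0)=\cO(\varepsilon^2)$ coming from $b(0)-\varepsilon^{-1}u_0=\cO(\varepsilon)$, this yields $r_c=\cO(\varepsilon^{2-\kappa})$. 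Combining, $u-\varepsilon b(\varepsilon^2\cdot)=r_c+u_s=\cO(\varepsilon^{2-\kappa})$ on $[0,T_\varepsilon]$ in $\cH$. The bootstrap is finally closed by choosing a stopping-time threshold slightly above $\varepsilon^{2-\kappa}$ and verifying that the estimate keeps $r_c$ and $u_s$ strictly below it, so that the stopping time exceeds $T_\varepsilon$ with the required moment control.
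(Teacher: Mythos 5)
Your proposal follows the same skeleton as the paper's proof: the a priori bound $u=\cO(\varepsilon)$ via the transformation $\tilde u=u-\sigma Z$ and the dissipativity \eqref{e:boundF}, a separate estimate for $P_su$, the bound $b=\cO(1)$ on the slow scale, and, for the centre part, the exact cancellation of the noise between $u_c$ and $\varepsilon b(\varepsilon^2\cdot)$ followed by a Gronwall argument over the long horizon (the paper runs this last step on the slow time scale, you on the fast one; that difference is immaterial). Your bookkeeping of orders, including $r_c(0)=\cO(\varepsilon^2)$ and the $\cO(1)$ exponential prefactor, also matches the paper's.

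The genuine gap lies in the norms in which your intermediate bounds are claimed and then used. The energy estimate gives $u=\cO(\varepsilon)$ uniformly only in $\cH$, together with the integrated bound $\int_0^{T_\varepsilon}\|\tilde u\|_X^4~\txtd t=\cO(\varepsilon^2)$; the uniform-in-time bound ``$u=\cO(\varepsilon)$ in $X$'' that you assert parenthetically cannot hold under the hypotheses of the theorem (the initial condition is only $\cO(\varepsilon)$ in $\cH$, so $\|u(t)\|_X$ is not controlled near $t=0$), and the paper neither proves nor uses such a bound. This breaks two of your steps as written. First, in the mild-formulation treatment of the stable part, the claim $P_s\cF(u)=\cO(\varepsilon^3)$ needs $\|u(s)\|_X=\cO(\varepsilon)$ pointwise in $s$; with only the $L^4$-in-time bound, H\"older against the smoothing kernel $(t-s)^{-\alpha}e^{-\mu(t-s)}$ yields merely $\cO(\varepsilon^{3/2})$ (and needs $\alpha<1/4$), which is not enough to conclude $u_s=\cO(\varepsilon^{2-\kappa})$. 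The paper avoids this entirely by an energy estimate for $\tilde u_s=P_s(u-\sigma Z)$, applying \eqref{e:boundF} with $z=u_c+\sigma Z_s$, which requires only the uniform $\cH$-bound on $u_c$ (upgraded to $X$ by norm equivalence on the one-dimensional space $\cN$) and the $X$-bound on $Z_s$ from Assumption~\ref{o:z}. Second, in your centre Gronwall the coupling term is not $\cO(\varepsilon^2)\|u_s\|_{\cH}$: Assumption~\ref{f} bounds $\cF$ only as a trilinear map on $X$, so the cross terms have the form $\|\psi\|_X^2\|u_s\|_X\|r_c\|$ and require $\|u_s\|_X$ — pointwise in time if you want your stated rate, whereas what is available is the integrated bound $\int_0^{T_\varepsilon}\|u_s\|_X^4~\txtd t=\cO(\varepsilon^4)$ (the paper's third step), which is what the paper feeds into its Gronwall estimate. (For the concrete cubic $\cF(u)=-u^3$ one can indeed trade $\|u_s\|_X$ for $\|u_s\|_{\cH}$ by putting $\psi$ and the test function in $L^\infty$, but that exploits the product structure, not the abstract assumption.) A minor further point: $b=\cO(1)$ cannot be quoted from Remark~\ref{fp}, which concerns only the stationary solution; for general $\cO(1)$ initial data it is established by the energy estimate in the paper's fourth step.
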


We split the proof into several steps in order to bound the error 
$u-\varepsilon b(\varepsilon^2\cdot) = P_su + (P_c u-\varepsilon b(\varepsilon^2\cdot)).$

\paragraph{1st step:} If $u_0=\cO(\varepsilon)$ in $\cH$, 
then we show that $u=\cO(\varepsilon)$ in $\cH$ on $[0,T_\varepsilon]$. 
Using the standard transformation
$\tilde{u}:=u - \sigma Z$ we obtain the partial differential equation with random coefficients
\[
\partial_t \tilde{u} 
= A\tilde{u} + \nu (\tilde{u}+ \sigma Z) + \cF(\tilde{u}+\sigma Z). 
\]
Note that $\sigma Z=\cO(\varepsilon)$ on $[0,T_\varepsilon]$ due to~\eqref{scaling:z}.
Thus it is enough to bound $\tilde{u}$.
Due to~\eqref{e:boundF} using Young's inequality we obtain the estimate 
\begin{eqnarray*}
\frac12\partial_t \|\tilde{u}\|^2 
&\leq & \nu \langle \tilde{u}+\sigma Z , \tilde{u}\rangle + \langle \cF(\tilde{u}+\sigma Z) , \tilde{u}\rangle  \\ 
&\leq & \nu \|\tilde{u}\|^2+ \nu\sigma \langle Z,\tilde{u}\rangle + C\sigma^4\|Z\|^4_X - \delta \|\tilde{u}\|^4_X \\
&\leq& \frac12(\nu\|\tilde{u}\|^2 -\delta \|\tilde{u}\|^4_X )
+ C(\nu\sigma^2 \| Z \|^2  +\sigma^4\|Z\|^4_X) \\
&\leq&  C(\nu^2+\nu\sigma^2 \| Z \|^2  +\sigma^4\|Z\|^4_X) = \cO(\varepsilon^4).
\end{eqnarray*}
Here we used again that $\sigma Z=\cO(\varepsilon)$ on $[0,T_\varepsilon]$, where $T_\varepsilon=\cO(\varepsilon^{-2})$ and that $\nu=\cO(\varepsilon^2)$. 
This completes the proof of the first step.
Additionally, we can also conclude that 
\[
\frac12\partial_t \|\tilde{u}\|^2  = - \frac14\delta \|\tilde{u}\|^4_X +  \cO(\varepsilon^4),
\]
which gives the $L^4(0,T_\varepsilon,X)$ bound on $\tilde{u}$
\begin{equation}
\label{e:bound-e-int}
   \int_0^{T_\varepsilon} \|\tilde{u}(t)\|^4_X~\txtd t =   \cO(\varepsilon^2).
\end{equation}

\paragraph{2nd step:}  If $u_0=\cO(\varepsilon)$ and $P_su_0=\cO(\varepsilon^{2})$ in $\cH$, then we show that $P_s u=\cO(\varepsilon^{2-\kappa})$ in $\cH$ on $[0,T_\varepsilon]$. To this aim we use the splitting
\[
u=  P_cu+P_s u:= u_c+ u_s,
\] 
and define with $Z_s:=P_s Z$. Again we use the standard transformation
\[
\tilde{u}_s=u_s-\sigma Z_s = P_s \tilde{u}. 
\] 
Using that $\tilde{u}=u - \sigma Z$ and taking the stable projection entails
\[
\partial_t \tilde{u}_s
= A \tilde{u}_s+\nu (\tilde{u}_s+\sigma Z_s)  +P_s\cF(u_c+\sigma Z_s+\tilde{u}_s).
\]
The Assumption~\ref{a} implies that the quadratic form of $A$ on $P_s\cH$ is bounded from below by a positive constant. 
Therefore we further obtain 
\[
\frac12 \partial_t \|\tilde{u}_s\|^2
\leq  -c\| \tilde{u}_s\|^2 + \nu \|\tilde{u}_s\|^2
+ \nu \langle  \tilde{u}_s,\sigma Z_s\rangle + \langle \cF(u_c+\sigma Z_s+\tilde{u}_s),\tilde{u}_s \rangle.
\]
Using~\eqref{e:boundF} together with the fact that $\varepsilon_0$ is sufficiently small and thus $\nu=\cO(\varepsilon^2)$ is small, we derive the energy estimate

\begin{equation}
\label{e:PsuinX}
    \frac12 \partial_t \|\tilde{u}_s\|^2
\leq  -\frac{c}2 \| \tilde{u}_s\|^2 + C\nu \sigma^2 \|Z_s\|^2
 + C(\|u_c\|^4_X+ \sigma^4\|Z_s\|^4_X) 
-\delta \|\tilde{u}_s\|^4_X,
\end{equation}
for two universal constants $c,C>0$. 
Hence, via a Gronwall type estimate, for all $t\leq T_\varepsilon$
\[
\|\tilde{u}_s(t)\|^2 
\leq \|\tilde{u}_s(0)\|^2 
+ C\int_0^{T_\varepsilon} e^{-c(t-\tau)}
\Big( \nu \sigma^2 \|Z_s\|^2
 + \|u_c\|^4_X+ \sigma^4\|Z_s\|^4_X\Big) ~\txtd \tau .
\]
We use that all norms are equivalent on $\cN$ together with the bounds $\sigma =\cO(\varepsilon^2)$, $\nu=\cO(\varepsilon^2)$, $\|Z_s\|_X=\cO(T^\kappa)$ and $P_su_0=\cO(\varepsilon^2)$
to obtain 
\[
\|\tilde{u}_s\|^2 =\cO(\varepsilon^4) \text{ on }[0,T_\varepsilon].
\]
Thus 
\[
\|{u}_s\| 
\leq \|\tilde{u}_s\| + \sigma\|Z_s\| 
=\cO(\varepsilon^2) + \cO(\varepsilon^{2-\kappa})\text{ on }[0,T_\varepsilon]
\]
which bounds the error on $P_s\cH$.
\paragraph{3rd step:} Bounds on $P_s u$ in $X$. 

Note that from \eqref{e:PsuinX} we also obtain 
\[
\delta \int_0^t\|\tilde{u}_s\|^4_X~\txtd t
\leq  \|\tilde{u}_s(0) \|^2 + C \int_0^t (\nu \sigma^2 \|Z_s\|^2
 + \|u_c\|^4_X+ \|Z_s\|^4_X)~\txtd  t
\]
and thus 
\[
\int_0^{T_\varepsilon}\|\tilde{u}_s(t)\|^4_X ~\txtd t = \cO(\varepsilon^4)
\]
or 
\[
\int_0^{T_0}\|\tilde{u}_s(\varepsilon^{-2}t )\|^4_X ~\txtd t = \cO(\varepsilon^6).
\]

\paragraph{4th step:} The order of the amplitude equation is $b=\cO(1)$ on $[0,T_0]$. The proof is fairly standard, compare~\cite[Lemma 4.3]{BlHa:04}. For completeness of presentation, we only sketch the main ideas. Subtracting the scalar stochastic convolution 
\begin{align*}
    z(T):=\sigma \varepsilon^{-2} \int\limits_0^T e^{-(\nu\varepsilon^{-2})(T-S)}~\txtd \beta_\varepsilon(S),
\end{align*}
we obtain the random ODE on the slow-time scale $T>0$ 
\begin{align*}
\partial_T \tilde{b} = \nu \varepsilon^{-2} (\tilde{b}+z) + \cF_c(\tilde{b}+z) + \nu \varepsilon^{-2} z,
\end{align*}
where we set $\tilde{b}:=b-z$.
Now taking the inner product with $\tilde{b}$ and using~\eqref{e:sign} to bound the stable cubic nonlinearity $\cF_c$, we obtain 
\begin{align*}
\frac{1}{2}\partial_T \|\tilde{b}\|^2 &=\langle \nu \varepsilon^{-2}(\tilde{b}+z),\tilde{b}\rangle + \langle \cF_c(\tilde{b}+z),\tilde{b}\rangle +\langle \nu \varepsilon^{-2} z, \tilde{b}\rangle\nonumber\\
& \leq K \nu\varepsilon^{-2} \|\tilde{b}\|^2 + K \nu \varepsilon^{-2} \|z\|^2 + \langle\cF_c (\tilde{b}+z),\tilde{b} \rangle\nonumber\\
& \leq K \nu \varepsilon^{-2}\|\tilde{b}\|^2 + K \nu\varepsilon^{-2} \|z\|^2 + C_\delta \|z\|^4 - \delta \|\tilde{b}\|^4\nonumber\\
& \leq K \nu \varepsilon^{-2} \|\tilde{b}\|^2 + K \nu \varepsilon^{-2}(1+\|z\|^2)^2,
\end{align*}
for positive constants all denoted by $K$. 
Now, Gronwall's inequality entails for $T\in[0,T_0]$
\begin{align*}
    \|\tilde{b}(T)\|^2 \leq e ^{\nu \varepsilon^{-2}K T} \|\tilde{b}(0)\|^2 + K\nu \varepsilon^{-2}\int_0^T e ^{\nu\varepsilon^{-2}K(T-\tau)}(1+\|z(\tau)\|^2)^2~\txtd \tau.
\end{align*}
This proves the statement regarding that $\nu=\cO(\varepsilon^{2})$. 

\paragraph{5th step:} We show that $\varepsilon^{-1}P_cu(\varepsilon^{-2} \cdot)-b= \cO(\varepsilon^2)$  on $[0,T_0]$ for our fixed $T_0$.
We recall that 
\[
\txtd u_c= (\nu u_c +\cF(u_c+u_s) )\txtd t + \sigma \txtd W_c. 
\]
If we define the error
\[
e:=b-\varepsilon^{-1} u_c(\varepsilon^{-2}\cdot) 
\]
we obtain after a short calculation
\[
\partial_t e = \frac{\nu}{\varepsilon^2} e + P_c\cF(b) 
- P_c\cF(\varepsilon^{-1} u(\varepsilon^{-2}\cdot) ).
\]
Taking the inner product with $e$ we further get
\begin{equation}
\label{e:est-e}
\frac12 \partial_t \|e \|^2  
= \frac12 \frac{\nu}{\varepsilon^2} \|e\|^2
+ \langle \cF_c(b)- \cF_c(\varepsilon^{-1} u(\varepsilon^{-2}\cdot)), e\rangle.
\end{equation}
We use with the short-hand notation $u^{(\varepsilon)}:=\varepsilon^{-1} u(\varepsilon^{-2}\cdot)$ and expand the cubic (using that on $\cN$ all norms are equivalent) to derive
\begin{eqnarray*}
\lefteqn{\langle \cF_c(b)- \cF_c(u^{(\varepsilon)}), e\rangle} \\
&\leq& \langle \cF_c(b)- \cF_c(u^{(\varepsilon)}_c)  , e\rangle
+  C \|e\|  \cdot ( \|u^{(\varepsilon)}_c\|^2 \|u^{(\varepsilon)}_s\|_X +  \|u^{(\varepsilon)}_c\| \|u^{(\varepsilon)}_s\|^2_X + \|u^{(\varepsilon)}_s\|^3_X) \\
&\leq &  -\delta \|e\|^4_X +  C \|e\|  \cdot ( \|u^{(\varepsilon)}_c\|^2 \|u^{(\varepsilon)}_s\|_X +  \|u^{(\varepsilon)}_c\| \|u^{(\varepsilon)}_s\|^2_X + \|u^{(\varepsilon)}_s\|^3_X) \\
&\leq &  -\frac12\delta \|e\|^4_X +  C \|u^{(\varepsilon)}_c\|^4   + C\|u_s^{(\varepsilon)}\|^4_X.
\end{eqnarray*}
Thus from \eqref{e:est-e} 
\[
\frac12 \partial_t \|e \|^2  
\leq \frac12 \frac{\nu}{\varepsilon^2} \|e\|^2
-\frac12\delta \|e\|^4_X +  C \|u^{(\varepsilon)}_c\|^4   + C\|u_s^{(\varepsilon)}\|^4_X
\]
and using a Gronwall type estimate 
we obtain for all $T\in[0,T_0]$ with constants depending on $T_0$
\[
\|e(T) \|^2  \leq C \|e(0)\|^2 + C \int_0^{T} ( \|u^{(\varepsilon)}_c\|^4   + C\|u_s^{(\varepsilon)}\|^4_X)~\txtd T =\cO(\varepsilon^4).
\]
Note that for $u_c$ due to the equivalence of norms, we can use the uniform bound in $\cH$,
while for $u_s $ we needed the bound on the  $L^4(0,T,X)$-norm. 
This finishes the proof of the last step and bounds the error on $\cN$. 
\qed


\section{Proof of the main results for non-negative $\nu$}\label{six}

We split the result in three cases.

\subsection{Case $1\gg\sigma\approx\nu >0$}\label{case1}
More precisely, we consider $\nu \in(0,\nu_0]$ for some sufficiently small $\nu_0$ and that the quotient $\sigma/\nu$ is bounded by positive constants from above and below.

In this case we observe with positive probability a finite-time Lyapunov exponent for the SPDE~\eqref{spde}. 
This means that we have instability shortly after the bifurcation. 
We recall that $\Omega_0$ was defined in Lemma~\ref{l1} and depends explicitly on the choice of the Brownian motion and thus on $\nu$. 
But the probability $\P(\Omega_0)\geq c>0$ is independent of the choice of the Brownian motion. 
\begin{theorem}\label{thm:main1}
Let $\{a(\omega)\}_{\omega\in\Omega}$ be the random fixed point of~\eqref{a1} and $\lambda_T$ be the finite-time Lyapunov exponent of the SPDE~\eqref{spde} with initial data $u_0=\varepsilon a(\omega)$. 
For all terminal times $T_0>0$ and all probabilities $p\in(0,1)$ 
there is a set $\Omega_p$ with probability larger than $p$ and a constant $C_p>0$
such that for $\omega\in \Omega_0\cap\Omega_p$ we have that 
\[ \lambda_{T\nu^{-1}}(\varepsilon a(\omega))> \frac{\nu}{4} -  C_p\frac{\varepsilon\nu}{T} \qquad \text{for all }T\in(0,T_0].\]
\end{theorem}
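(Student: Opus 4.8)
The plan is to transport the positive lower bound on the finite-time Lyapunov exponent of the amplitude equation \eqref{a1}, established in Lemma~\ref{l1}, to the SPDE \eqref{spde} by means of the approximation Theorem~\ref{thm:aprox}, the essential extra work being the passage from the nonlinear solution to its first variation \eqref{linearization}. Throughout I would choose $\varepsilon=\sqrt\nu$, so that \eqref{e:AE} is exactly \eqref{a1} and the fast time $t=T\nu^{-1}$ corresponds to the slow time $T$. Since $u_0=\varepsilon a(\omega)\in\cN$, we have $u_0=\cO(\varepsilon)$ and $P_su_0=0$, so Theorem~\ref{thm:aprox} applies with $b(0)=a(\omega)$; as $a$ is the stationary solution of \eqref{a1}, $b(T)=a(\theta_T\omega)$, and its fifth step gives the sharpened center estimate
\[
P_cu(\varepsilon^{-2}\cdot)=\varepsilon\, a(\theta_{\cdot}\omega)+\cO(\varepsilon^{3})\ \text{ in }\cN,
\]
while $\|P_su\|_{\cH}=\cO(\varepsilon^{2-\kappa})$ and the third step provides the $L^4$-in-time bound $\int_0^{T_0}\|P_su(\varepsilon^{-2}S)\|_X^4\,\txtd S=\cO(\varepsilon^{6})$.

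For the lower bound on the operator norm I would use Remark~\ref{propLyap}: it suffices to evaluate the linearized flow on a single test direction. Taking $v_0=e$ with $\|e\|=1$ gives $\|U_{u_0}(t)\|_{\cL(\cH)}\ge\|v(t)\|\ge|v_c(t)|$, where $v$ solves \eqref{linearization} and $v_c:=\langle v,e\rangle$, with $v_c(0)=1$. Since $Ae=0$ and $A$ is symmetric, projecting onto $\cN$ yields the scalar equation
\[
\partial_t v_c=\nu v_c+\langle D\cF(u)v,e\rangle .
\]
Expanding $D\cF(u)v$ by trilinearity along $u=P_cu+P_su$ and $v=v_ce+P_sv$, the center--center contribution is $\langle D\cF(P_cu)(v_ce),e\rangle=\varepsilon^2 D\cF_c(b)\,v_c+\cO(\varepsilon^{4})\,|v_c|$, which together with $\nu v_c=\varepsilon^2 v_c$ reproduces, after rescaling to $T=\varepsilon^2 t$, the linearization $\partial_T v_c=(1+D\cF_c(b))v_c$ of the amplitude equation. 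All remaining terms are collected into an error $R$.

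The error $R$ has two sources: the stable component $P_sv$ of the first variation, and the stable part $P_su$ entering the quadratic form $D\cF(u)$. I would control $P_sv$ through the variation-of-constants formula for \eqref{linearization}, using $\|e^{tA}P_s\|_{\cL(\cH)}\le e^{-t\mu}$ from Assumptions~\ref{a}, the analytic smoothing into $\cH^{1/2}\subset X$, and $\|D\cF(u)v\|_{X^*}\le C\|u\|_X^2\|v\|_X$; as the coupling is of order $\varepsilon^2$ and the exponential decay compensates the length of $[0,T_\varepsilon]$, an absorption argument renders $P_sv$ subdominant. The $P_su$-terms are then estimated in the variation-of-constants integral for $v_c$ by H\"older in time, via the $L^4$-$X$ bounds $\int_0^{T_0}\|P_su\|_X^4\,\txtd S=\cO(\varepsilon^{6})$ and $\int_0^{T_0}\|u\|_X^4\,\txtd S=\cO(\varepsilon^{4})$. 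All of these hold as moment bounds in the sense of Definition~\ref{order}; by Markov's inequality I would define an event $\Omega_p$ with $\P(\Omega_p)>p$ on which they become pathwise with a constant $C_p$, so that $R$ contributes at most $\cO(\varepsilon)$ to $\ln|v_c|$ uniformly for $T\in(0,T_0]$.

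Finally, on $\Omega_0\cap\Omega_p$ I would close the argument by variation of constants. With $\Phi(T):=\exp\big(\int_0^T(1+D\cF_c(b(S)))\,\txtd S\big)$, Lemma~\ref{l1} gives $\Phi(T)\ge e^{T/4}$, and since $|a(\theta_T\omega)|<\tfrac12$ on $\Omega_0$ forces $1+D\cF_c(b)>\tfrac14>0$, the factor $\Phi$ is increasing. Writing $v_c(T)=\Phi(T)(1+\rho(T))$ gives $\rho'=R/\Phi$, and a short bootstrap using $\sup_{[0,S]}\Phi=\Phi(S)$ and $v_c(0)=1$ bounds $|\rho(T)|=\cO(\varepsilon)$ on the fixed interval $[0,T_0]$. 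Hence $\ln|v_c(T)|\ge\tfrac T4-C_p\varepsilon$, and dividing by the fast time $T\nu^{-1}$ yields
\[
\lambda_{T\nu^{-1}}(\varepsilon a(\omega))\ge\frac{\nu}{T}\Big(\frac{T}{4}-C_p\varepsilon\Big)=\frac{\nu}{4}-C_p\frac{\varepsilon\nu}{T}.
\]
The main obstacle is exactly this passage from the nonlinear solution to its linearization: Theorem~\ref{thm:aprox} controls only $u$, whereas the finite-time Lyapunov exponent is driven by \eqref{linearization}, so one must propagate the amplitude approximation into the linearized flow and, despite the center subspace $\cN$ being non-invariant, verify that the stable modes remain subdominant uniformly over the long horizon $[0,T\nu^{-1}]$.
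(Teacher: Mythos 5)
Your proposal is correct and takes essentially the same route as the paper's proof: set $\varepsilon=\sqrt{\nu}$, start the SPDE in $\varepsilon a(\omega)$ so that Theorem~\ref{thm:aprox} applies with $b(T)=a(\theta_T\omega)$, restrict the linearized flow to initial data in $\cN$ (Remark~\ref{propLyap}), show that the SPDE linearization differs from the linearized amplitude equation by an $\cO(\varepsilon)$ error on a high-probability set $\Omega_p$ (using the smallness of the stable part of the first variation, the $L^4$-in-time bounds on $U$ and $P_sU$ from Theorem~\ref{thm:aprox}, and Markov's inequality to pass from moment to pathwise bounds), and conclude on $\Omega_0\cap\Omega_p$ with Lemma~\ref{l1} and the elementary logarithm estimate after dividing by the fast time $T\nu^{-1}$. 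The only difference is one of packaging: the paper proves the additive error bound $\|V-\varphi\|_{\cH}\le C\varepsilon$ by energy/Gronwall estimates for $V_s$ and $\|V_c-\varphi\|^2$ (Theorem~\ref{error:linearization1}), whereas you encode the same estimates in a scalar multiplicative decomposition $v_c=\Phi(1+\rho)$ with a bootstrap for $\rho$; the ingredients and the resulting bound $\frac{\nu}{4}-C_p\frac{\varepsilon\nu}{T}$ are the same.
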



\begin{remark}
This theorem gives a positive lower bound on the Lyapunov exponent with positive probability for times between order $ 1/\sqrt{\nu} $ and $1/\nu$. We will see below that the lower bound on $T$ is mainly due to the uniform in time error estimate for the amplitude equation. 
We believe that this is only a technical restriction that can be solved by improving the approximation result for amplitude equations for smaller times, 
and we expect the finite time Lyapunov exponent to be positive for all times up to order $1/\nu$.  
 \end{remark}

We split the proof of Theorem \ref{thm:main1} into several steps. The main ideas are  the approximation of the SPDE~\eqref{spde} with the amplitude equation for $\varepsilon^2=\nu$,  Lemma~\ref{l1} and the control of the approximation error. 
As we start the SPDE in the rescaled random attractor,
we can apply these results, as $a=\cO(1)$ and thus $u_0=\varepsilon a = \cO(\varepsilon)$.

In this case the amplitude equation is given by %
\begin{align*}
\txtd b=[b-\cF_c(b)]~\txtd T + \frac{\sigma}{\nu} \txtd \beta_{\sqrt{\nu}}(T).
\end{align*}
Now we control the approximation error between the linearized SPDE and the linearized ODE.  To this aim we firstly introduce the slow scaling
$T=t\varepsilon^2$ and define $U$ via 
\[
u(t)= \varepsilon U(t\varepsilon^2).
\]
Let $v$ be the solution of the linearization of the SPDE around a solution $u$
\[ \partial_t v =Av+\nu v+ D\cF(u)v.
\]
On the slow scale $v(t)=\varepsilon V(t\varepsilon^2)$ 
we have (using that $D\mathcal{F}$ is quadratic)
\[
\partial_T V = \varepsilon^{-2} AV + V + D\cF(U)V. \]

Let $\varphi$ be the solution of the linearization of the amplitude equation around the attractor $\{a(\omega)\}_{\omega\in\Omega}$
\[
\partial_T \varphi = \varphi + D\cF_c(a) \varphi.
\]
We only consider initial conditions $V(0)=\varphi(0)\in\cN$ of order $1$ independent of $\varepsilon$.

The first crucial step is the following approximation result.
\begin{theorem}\label{error:linearization1}
For any probability $p\in(0,1)$ there is a set 
 $\Omega_p$ with probability larger than $p$ such that 
the error between the linearization of the SPDE~\eqref{spde1} with initial data $u_0=\varepsilon a(\omega)$ and of the amplitude equation~\eqref{a1} is bounded by $C\varepsilon$. 
\end{theorem}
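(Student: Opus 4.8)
The plan is to compare the two linearized flows directly on the slow time scale by estimating $d := V - \varphi$, split according to $\cH = \cN \oplus \cS$ as $V - \varphi = (P_c V - \varphi) + P_s V$, where $P_s\varphi = 0$ and, because the initial data agree in $\cN$, $P_s V(0) = 0$ and $P_c V(0) - \varphi(0) = 0$. The goal is to show both summands are $\cO(\varepsilon)$ in $\cH$ on $[0,T_0]$. First I would record the a priori bounds coming from the sign condition~\eqref{e:sign}: testing $\partial_T V = \varepsilon^{-2}AV + V + D\cF(U)V$ against $V$ and using $\langle \varepsilon^{-2}AV,V\rangle \le 0$ together with $\langle D\cF(U)V,V\rangle \le 0$ gives $\tfrac12\partial_T\|V\|^2 \le \|V\|^2$, hence $V = \cO(1)$ in $\cH$ on $[0,T_0]$, and the same for $\varphi$. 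I would then import from Theorem~\ref{thm:aprox} and its proof that, since we linearize around the stationary trajectory $a(\theta_T\omega)$ of~\eqref{a1}, one has $U_c - a = \cO(\varepsilon^2)$ in $\cN$, $U_s = \cO(\varepsilon^{1-\kappa})$ in $\cH$, and the $L^4(0,T_0;X)$-bounds on $U$ and $U_s$.

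For the stable part I would pass to the mild formulation
\[
P_s V(T) = \int_0^T e^{\varepsilon^{-2}(T-S)A}P_s\big[V + D\cF(U)V\big](S)\,\txtd S
\]
and exploit the smoothing of the \emph{fast} semigroup. Using $\|e^{tA}P_s\|_{\cL(\cH^{-\alpha},\cH^\alpha)} \le C t^{-2\alpha}e^{-\mu t}$ with $2\alpha < 1$ and $\|D\cF(U)V\|_{\cH^{-\alpha}} \le C\|U\|_X^2\|V\|_X$, the substitution $r = \varepsilon^{-2}(T-S)$ converts the convolution kernel into
\[
\int_0^T \varepsilon^{4\alpha}(T-S)^{-2\alpha}e^{-\mu\varepsilon^{-2}(T-S)}\,\txtd S = \varepsilon^{2}\int_0^{\varepsilon^{-2}T} r^{-2\alpha}e^{-\mu r}\,\txtd r = \cO(\varepsilon^{2}),
\]
the integral converging since $\alpha < 1/2$. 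Together with the $L^4$-in-time control of the forcing this bounds $P_s V$ in $\cH^\alpha \subset X$ by $\cO(\varepsilon)$ (in fact better), the circular dependence on $\|V\|_X$ being closed by a standard bootstrap, since the $\varepsilon^2$-prefactor makes the associated map contractive for small $\varepsilon$.

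For the center error $e_c := P_c V - \varphi \in \cN$, with $e_c(0) = 0$, one has $P_c A V = 0$ as $\cN=\ker A$, so the difference of the two center equations is
\[
\partial_T e_c = e_c + P_c\big[D\cF(U)(V - \varphi) + (D\cF(U) - D\cF(a))\varphi\big].
\]
Testing with $e_c$ and using $\langle D\cF(U)d, d\rangle \le 0$, the first nonlinear contribution leaves only the cross term $-\langle D\cF(U)d, P_s V\rangle$, which is of lower order by the smallness of $P_s V$ in $X$ and the boundedness of $\|U\|_X, \|d\|_X$; the second is estimated by $\|(D\cF(U)-D\cF(a))\varphi\|_{\cH^{-\alpha}} \le C\|U-a\|_X(\|U\|_X + \|a\|_X)\|\varphi\|_X$ with $U - a = \cO(\varepsilon^2) + \cO(\varepsilon^{1-\kappa})$. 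Since all norms on $\cN$ are equivalent and the forcing is integrable in time, a Gronwall estimate on $\|e_c\|^2$ over $[0,T_0]$ yields $e_c = \cO(\varepsilon)$, and combining with the stable bound gives $V - \varphi = \cO(\varepsilon)$ in $\cH$.

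Finally, since all these bounds hold in the moment sense of Definition~\ref{order}, I would obtain the pathwise statement by Chebyshev's inequality: for each $p \in (0,1)$, taking the moment order large and discarding an event of probability at most $1-p$ produces $\Omega_p$ with $\P(\Omega_p) > p$ on which $\|V - \varphi\|_{\cH} \le C\varepsilon$ uniformly on $[0,T_0]$. \emph{The main obstacle} will be the regularity mismatch -- the cubic $D\cF$ only maps into $X^* \subset \cH^{-\alpha}$ while the estimates need $V$ and $P_s V$ in $X \subset \cH^\alpha$ -- which forces the use of the analytic smoothing of the fast semigroup $e^{\varepsilon^{-2}(T-S)A}P_s$ and careful tracking of the $L^4(0,T_0;X)$-bounds from Theorem~\ref{thm:aprox}, keeping the center--stable coupling self-consistent. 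The delicate point is that the $\varepsilon$-gain from the fast semigroup must beat both the $(T-S)^{-2\alpha}$ singularity of the smoothing kernel and the $X$-norm growth of the nonlinearity, which is precisely what the scaling computation delivers for $\alpha < 1/2$.
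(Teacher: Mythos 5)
Your overall architecture coincides with the paper's own proof: the same splitting $V-\varphi=(P_cV-\varphi)+P_sV$, the same $\cO(1)$ energy bound on $V$ via \eqref{e:sign}, the same three-term decomposition of the nonlinear difference in the centre equation followed by Gronwall, the same inputs from Theorem~\ref{thm:aprox} (smallness of $a-U_c$, the $L^4(0,T_0;X)$-bounds on $U$ and $U_s$), and the same Chebyshev-type passage from moment bounds to the set $\Omega_p$. The one place where you genuinely depart from the paper is the estimate of the stable part $P_sV$, which you treat by Duhamel's formula and analytic smoothing of the fast semigroup instead of an energy estimate, and that step has a gap.

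The bootstrap does not close as stated. The forcing $P_sD\cF(U)V$ is controlled only through $\|U\|_X^2\|V\|_X$, and the only time-regularity available for $\|U\|_X^2$ is $L^2(0,T_0)$, coming from $\int_0^{T_0}\|U(S)\|_X^4~\txtd S=\cO(1)$ as in \eqref{e:intBouU}; there is no uniform-in-time $X$-bound on $U$. By Young's convolution inequality, an $L^1$-in-time bound on the kernel (which is what your displayed computation $\varepsilon^2\int_0^{\varepsilon^{-2}T}r^{-2\alpha}e^{-\mu r}~\txtd r=\cO(\varepsilon^2)$ gives) paired with $L^2$-in-time forcing only yields an $L^2$-in-time bound on $V_s$, which cannot feed a sup-norm bootstrap. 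To obtain $\sup_{T\le T_0}\|V_s(T)\|_{\cH^\alpha}$ you must instead use the $L^2$-in-time norm of the kernel, which equals $\varepsilon\left(\int_0^\infty\rho^{-4\alpha}e^{-2\mu\rho}~\txtd\rho\right)^{1/2}$ and is finite only when $\alpha<1/4$; in that restricted range your bootstrap does close (giving $M\le \cO(\varepsilon^2)+C\varepsilon(1+M)$, hence $P_sV=\cO(\varepsilon)$), but Assumption~\ref{f} allows the whole range $\alpha\in(0,1/2)$, and for instance the surface-growth example requires $\alpha>1/4$. The paper avoids this obstruction entirely by an energy estimate: testing the $V_s$-equation with $V_s$, the dangerous quadratic term $\langle D\cF(U)V_s,V_s\rangle\le 0$ vanishes by the sign condition \eqref{e:sign} (a structure the mild formulation cannot see), Assumption~\ref{a} supplies the dissipation $-c\varepsilon^{-2}\|V_s\|^2_{\cH^{1/2}}$, and the remaining cross term $\langle P_sD\cF(U)V_c,V_s\rangle\le C\|U\|_X^2\|V_c\|_X\|V_s\|_{\cH^\alpha}$ is absorbed by $\varepsilon$-Young, using only the $L^4(0,T_0;X)$-bound on $U$; this yields $\|V_s\|_{\cH}=\cO(\varepsilon)$ and $\|V_s\|_{L^2(0,T_0;\cH^{1/2})}=\cO(\varepsilon^2)$ for all $\alpha<1/2$. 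To repair your proof, either restrict to $\alpha<1/4$ (insufficient for the stated generality) or replace the Duhamel step by this energy argument; the rest of your proposal then goes through as in the paper.
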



\begin{proof}
We show in several steps that the following error bound holds on the set of large probability $\Omega_p$ 
\begin{align}\label{error:linearization}
\|V(T) - \varphi(T) \|_{\cH} \leq \|P_s V(T) +P_c V(T) -\varphi(T) \|_{\cH} \leq C \varepsilon,~~T\in[0,T_0].
\end{align}
To this aim we first prove 
\begin{equation}
\label{e:Vclaim}
  \| P_sV(T)\|_{\cH} =\cO(\varepsilon) 
\quad\text{and}\quad
\| P_sV\|_{L^2(0,T_0,\cH^{1/2})} =\cO(\varepsilon^{2}).   
\end{equation} 

We first consider $V$ and use standard energy-type estimates
to obtain 
\[
 \frac12 \partial_T\|V\|^2 =  \varepsilon^{-2} \langle AV, V \rangle  + \|V\|^2 + \langle D\cF(U)V , V \rangle \\
 \leq  \|V\|^2, 
\]
where we used the non-negativity of $A$ and \eqref{e:sign}. As  $V(0)=\cO(1)$ this yields a uniform $\cO(1)$-bound on $V$ and thus $P_cV$ in $\cH$ on $[0,T_0]$ (with  constants depending on $T_0$).

We have (using the short-hand  notation $V_s:=P_sV$ and $V_c:=P_cV$) 
\begin{align}
 \frac12 \partial_T \|V_s\|^2 = & \varepsilon^{-2} \langle AV_s, V_s \rangle  + \|V_s\|^2 + \langle P_sD\cF(U)V , V_s \rangle \nonumber\\
 \leq &  - c\varepsilon^{-2}\|V_s\|^2_{\cH^{1/2}}+\|V_s\|^2 + \langle P_sD\cF(U)V_c , V_s \rangle, \label{e:Vappr}
\end{align}
where we used the spectral properties of $A$ (Assumption~\ref{a}) and the sign condition on $D\cF$ from \eqref{e:sign}. 

The crucial step is to estimate the nonlinear term. This we can bound as follows 
\[
\langle P_sD\cF(U)V_c , V_s \rangle  \leq C \|U\|_X^2 \|V_c\|_X \|V_s\|_{\cH^{\alpha}}
\leq  C \varepsilon^2 \|U\|_X^4 \|V_c\|_X^2 + c \varepsilon^{-2}\|V_s\|^2_{\cH^{\alpha}},
\]
where we used $\varepsilon$-Young's inequality in the last step. 
Further, as shown above $V$ is $\cO(1)$ in $\cH$. 
Therefore we obtain that  $V_c$ is bounded in $X$ since all norms are equivalent on $\cN$. 
Consequently, we only need a bound on $\int_0^T \|U(S)\|_X^4~\txtd S$, which can be derived from the first step of the approximation result, Theorem~\ref{thm:aprox}. 
Namely, using 
$$ 
\int_0^{T_\varepsilon} \|u(t)\|^4_X~\txtd t =\cO(\varepsilon^2)
$$ 
we obtain
\begin{equation}
\label{e:intBouU}
  \int_0^{T_0} \|U(S)\|^4_X~\txtd S= 
\varepsilon^2\int_0^{T_\varepsilon} \|U(t\varepsilon^{2})\|^4_X~\txtd t 
=\varepsilon^{-2} \int_0^{T_\varepsilon} \|u(t)\|^4_X~\txtd t
=\cO(1).  
\end{equation}

Thus we can conclude from \eqref{e:Vappr} for two different universal constants $c>0$ and $C>0$ using that $\|\cdot\|_{\cH^\alpha}\leq \|\cdot\|_{\cH^{1/2}}$ 
\begin{align}\label{second}
    \frac{1}{2}\partial_T \|V_s\|^2 &\leq -c \varepsilon^{-2} \|V_s\|^{2}_{\cH^{1/2}} +\|V_s\|^2_{\cH^\alpha} + C\varepsilon^{2}\|U\|^4_X \|V_c\|^2_X + c\varepsilon^{-2}\|V_s\|^2_{\cH^\alpha}\nonumber\\
    & \leq -c\varepsilon^{-2}\|V_s\|^2_{\cH^{1/2}} + C \varepsilon^2\|U\|^4_X \|V_c\|^2_X. 
\end{align}
Consequently, recalling that $V_s(0)=0$ via a Gronwall type estimate we obtain for all $T\in[0,T_0]$ the inequality (with constants depending on $T_0$)
\[ 
\|V_s(T)\|^2 \leq  C \varepsilon^2 \int_0^{T_0} \|U(S)\|^4_X~\txtd S = \cO(\varepsilon^2),
\]
which means that $\|V_s\|_{\cH}=\cO(\varepsilon)$, as claimed.

For the second statement in \eqref{e:Vclaim} we get from~\eqref{second}  that 
\[ c\varepsilon^{-2} \|V_s\|^2_{\cH^{1/2}} 
\leq -\frac{1}{2}\partial_T \|V_s\|^2 + C\varepsilon^2 \|U\|^4_X \|V_c\|^2_X,  
\]
therefore by integration (recall $V_s(0)=0$) we derive
\[ \int_0^{T_0}\|V_s(S)\|^2_{\cH^{1/2}}~\txtd S 
\leq  -\frac{c \varepsilon^2}{2} \|V_s(T)\|^2 + C \varepsilon^4 \int_0^{T_0} \|U(S)\|^4_X\|V_c(S)\|^2_X~\txtd S. 
\]
As $\|V_s\|_\cH=\cO(\varepsilon)$ and $\int_0^{T_0} \|U(S)\|^4_X~\txtd S=\cO(1)$ we obtain 
\[
\|V_s\|_{L^2(0,T_0,\cH^{1/2})}=\cO(\varepsilon^2).
\]
We now focus on the bound for $\|V_c-\varphi\|$, which requires more work. We observe that $V_c-\varphi$ satisfies the equation
\[
\partial_T (V_c -\varphi) = V_c-\varphi + (D \cF_c (U) V - D\cF_c(a)\varphi),
\]
so we have to estimate
\begin{equation}
    \label{e:Vc-phi}
\frac{1}{2}\partial_T \|V_c-\varphi\|^2 =\| V_c-\varphi\|^2 + \langle D \cF_c (U) V - D\cF_c(a)\varphi, V_c-\varphi \rangle.
\end{equation}
Here the crucial term contains the nonlinearity 
\begin{eqnarray*}
  \langle D\cF_c(a)\varphi - P_c D\cF(U)V , \varphi-V_c\rangle_{\cN}
  &= &  - \langle  P_c D\cF(U)V_s ,  \varphi-P_cV\rangle_{\cN}\\
  && + \langle D\cF_c(a)\varphi - P_c D\cF(a)V_c , \varphi-P_cV\rangle_{\cN}\\
  &&+\langle  P_c [D\cF(a) - D\cF(U) ]V_c , \varphi-P_cV\rangle_{\cN},
\end{eqnarray*}
where the bound on $P_sV$ is needed in the space $X$, but here the integral bounds turn out to be sufficient.
We also rely on our  $\mathcal{O}(1)$-bounds on $\varphi$ and $V_c$.

We begin with the first term above which entails
\begin{align*}
\langle  P_c D\cF(U)V_s , \varphi-V_c\rangle_{\cN} 
&\leq  C \| U\|^2_X \|V_s\|_X \|\varphi-V_c\|_{\cN}\\
 &\leq C \|U\|^2_X \|V_s\|_{\cH^{1/2}}
 \|\varphi-V_c\|_{\cN} 
\\&
\leq C  \|V_s\|^2_{\cH^{1/2}} + \|U\|^4_X
 \|\varphi-V_c\|^2_{\cN} . 
 \end{align*}
 In the last step we used again Young's inequality.

The second term gives
\[  \langle D\cF_c(a)(\varphi-V_c),\varphi-V_c \rangle_{\cN} \leq C\|a\|_{\mathcal{N}}^2 \|\varphi-V_c\|^2_{\cN}.
\]

For the last one
we use that $P_c D\cF$ and $D\cF_c$ are the same on $\cN$, which can be seen by explicitly using the cubic.
\begin{align*}
\langle P_c [D\cF(a)-D\cF(U)] V_c,\varphi-V_c  \rangle_{\cN}
& \leq C \|a-U\|^2_X \|V_c\|_X \|\varphi-V_c\|_{\cN}
\\& \leq C \|a-U_c\|^2_{\cN} \|V_c\|_{\cN} \|\varphi-V_c\|_{\cN} 
+ C \|U_s\|^2_X \|V_c\|_{\cN} \|\varphi-V_c\|_{\cN}\\
&\leq C \|a-U_c\|^4_{\cN} \|V_c\|^2_{\cN} 
+ C \|U_s\|^4_X \|V_c\|^2_{\cN} + c\|\varphi-V_c\|^2_{\cN}.
\end{align*}
Regarding~\eqref{e:Vc-phi} and putting all the estimates together we infer that (with universal constants $C,\tilde{C}>0$)
\begin{align*}
\frac{1}{2}\partial_T \|V_c-\varphi\|^2 &\leq \|V_c-\varphi\|^2 + C \|V_s\|^2_{\cH^{1/2}} + \|U\|^4_X \|\varphi - V_c\|^2_{\cN} +\|a\|^2_{\cN}\|\varphi-V_c\|^2_{\cN} \\
& + C \|a-U_c\|^4_{\cN} \|V_c\|^2_{\cN} + C \|U_s\|^4_{X} \|V_c\|^2_{\cN} +C \|\varphi- V_c\|^2_{\cN}\\
& \leq \|V_c-\varphi\|^2_{\cN} ( \tilde{C}+\|U\|^4_X +\|a\|^2_{\cN})
+ C \|V_s\|^2_{\cH^{1/2}} + C \|a-U_c\|^4_{\cN}\|V_c\|^2_{\cN} +C\|U_s\|^4_X \|V_c\|^2_{\cN}\\
&\leq  I \cdot \|V_c-\varphi\|^2_{\cN} + C \cdot J,
\end{align*}
where we set 
\[
I:= \tilde{C}+\|U\|^4_X + \|a\|^2_{\cN}
\quad\text{and}\quad 
J:=\|V_s\|^2_{\cH^{1/2}} 
+  \|a-U_c\|^4_{\cN}\|V_c\|^2_{\cN}
+\|U_s\|^4_X \|V_c\|^2_{\cN}.
\]
Using Gronwall's inequality we get for $T\in[0,T_0]$
\begin{align*}
\|V_c(T)-\varphi(T)\|^2 \leq \|V_c(0)-\varphi(0)\|^2 
+\int_0^T J(S)~\txtd S \exp \Big(\int_0^T I(S)~\txtd S\Big).
\end{align*}
We now investigate the order of $J$.
First of all, since we start the SPDE in $u_0=\varepsilon a(\omega)$ we obtain due to Theorem~\ref{thm:aprox} 
\[
\|a(T)-U_c(T)\|_{\cH} 
= \varepsilon^{-1} \|\varepsilon a- u_c(\varepsilon^{-2} \cdot )\|_{\cH} =\cO(\varepsilon).
\]
Again we use the fact that all norms are equivalent on $\cN$.
Further, similar to \eqref{e:intBouU} by using the third step in Theorem~\ref{thm:aprox} we know that with $u_s(t)=\varepsilon U_s(t\varepsilon^2)$ 
\[ 
\int_0^{T_{0}} \|U_s(T)\|^4_X ~\txtd T
=
 \varepsilon^{-2} \int_0^{T_\varepsilon} \|u_s(t)\|^4_X~\txtd t 
=\cO(\varepsilon^2).
\]
Due to the above results, we have 
 pathwise bounds for 
 $\int_0^T J(S)~\txtd S \leq C\varepsilon^2$ on a set of probability going to $1$ for $C\to\infty$. Moreover,
\[ \exp\Big(\int_0^T I(S)~\txtd S \Big)
= \exp\Big(\int_0^T (\tilde{C}+\|U(S)\|^4_X + \|a(S)\|^2_{\cN}~\txtd S\Big)
\leq C 
\]
on a set of probability going to $1$ for $C\to \infty$. 
Together with the previous bound this gives another condition for the set $\Omega_p$.

In summary, this entails the following error bound on $\Omega_p$
\[
\|\varphi(T)-V_c(T)\|^2 \leq C \varepsilon^2 ~\text{ for } T\in[0,T_0].
\]
Putting all these deliberations together, proves the statement \eqref{error:linearization} on $\Omega_p$, i.e.\
\[
\|V(T)-\varphi(T)\|_{\cH} 
\leq \|V_s(T)\|_{\cH}+\|V_c(T)-\varphi(T)\|_{\cN} 
\leq C \varepsilon,~~T\in[0,T_0]. 
\]
Here we have to add another condition to $\Omega_p$, 
as $\|V_s\|_{\cH}\leq C\varepsilon^2$ uniformly in $T$ with probability going to $1$ if $C\to\infty$.
\qed
\end{proof}

Using this result we can proceed with the proof of Theorem~\ref{thm:main1}.
We first recall the definition of the FTLE
\begin{eqnarray*}
\lambda_{T\nu^{-1}}(\varepsilon a(\omega)) &=& \frac{\nu}T\ln( \sup\{\|v(T/\nu)\| \ : \ \|v(0)\|=1  \})\\
&=& \frac{\nu^{3/2}}
T\ln( \sup\{\|V(T)\| \ : \ \|V(0)\|= \varepsilon^{-1}  \})\\
&=& \frac{\nu}T\ln( \sup\{\|V(T)\| \ : \ \|V(0)\|=1  \}).
\end{eqnarray*}

Using \eqref{error:linearization} for the finite-time Lyapunov exponents of the SPDE we have on $\Omega_0\cap\Omega_p$ recalling Lemma~\ref{l1}
\begin{eqnarray*}
\|V(T)\|
&\geq  & \|\varphi(T)\| - \|V(T)-\varphi(T)\| 
\geq   \|\varphi(T)\| - C\varepsilon \\
&\geq  &   \exp\{(1-3\delta^2)T\}  - C\varepsilon >0,
\end{eqnarray*}
which is positive if $\varepsilon_0$ is sufficiently small. Here we can choose $\delta=\frac{1}{2}$ as in Lemma~\ref{l1}.

To proceed we use a simple estimate for the logarithm. It is known that there exists a positive constant $c>0$ such that $ \ln(1-x)\geq -c x $ for $0\leq x\leq\frac{1}{2}.$   Therefore as $\varepsilon \ll e^{Tc} $ we have that
\[
\ln(e^{cT}-\varepsilon) =\ln(e^{cT}(1-\varepsilon e^{-cT})) 
= cT + \ln(1-\varepsilon e^{-cT}) \geq cT - C\varepsilon e^{-cT}\geq cT - C\varepsilon.
\]
Thus we can conclude that on $\Omega_0\cap \Omega_p$ we can bound
\begin{eqnarray*}
\lefteqn{\lambda_{T\nu^{-1}}(\varepsilon a(\omega)) = 
\frac{\nu}T\ln( \sup\{\|V(T)\| \ : \ \|V(0)\|=1  \})}\\
&\geq  &  \frac{\nu}{T}\ln( \sup\{\|\varphi(T)\| - \|V(T)-\varphi(T)\| : \ \|V(0)\|=1  \})\\
&\geq  &  \frac{\nu}T\ln( \sup\{\|\varphi(T)\| - \|V(T)-\varphi(T)\| : \ \|v(0)\|=1, V(0)=\varphi(0)\in\cN  \})\\
&\geq  &  \frac{\nu}T\ln( \sup\{\|\varphi(T)\| - \varepsilon  : \ \|V(0)\|=1, V(0)=\varphi(0)\in\cN  \})\\
&\geq  &   \frac{\nu}T\ln( \exp\{(T+ \int_0^T D\cF_c(a(\theta_s\omega))~\txtd s\}  - C\varepsilon)\\
&\geq  &   \frac{\nu}T\ln( \exp\{(1-3\delta^2)T\}  - C\varepsilon)\\
& \geq & \nu (1-3\delta^2-C\varepsilon/T),
\end{eqnarray*}
and we finished the proof, as $\delta=1/2$.
\qed

\subsection{Case $1\gg\nu\gg\sigma>0$}\label{case2}

In this case the amplitude equation is given by 
\[
\txtd b=[b+\cF_c(b)]~\txtd T. 
\]
Here we consider the solution $b=0$ of the amplitude equation and let $u$ be the solution of SPDE with $u(0)=u_0=0$.
Let us remark that this is not precisely the setting of the approximation result of section \ref{five}, where the  amplitude equation would contain a small noise term $(\sigma/\nu)\txtd\beta$.   Here we simplify the proof by neglecting the small noise in the approximation. Thus we cannot use the full Theorem \ref{thm:aprox} here, but can still rely on all the bounds provided for $u$,  see Remark~\ref{r:a} below.

As before, let $V$ be the solution of the linearized SPDE
\[ 
\partial_T V =\varepsilon^{-2}AV+ V+ D\cF(U)V
\]
and thus
\[ \partial_T V_c = V_c+  P_cD\cF(U)V = V_c+ D\cF_c(U)(V_c+V_s).
\]
The linearization of the amplitude equation around 0 reduces to 
\[ \partial_T \varphi = \varphi + DF_c(0)\varphi ,\]
which gives 
\[ \partial_T \varphi =\varphi. \] 

The main result in this case reads as follows.

\begin{theorem}\label{thm:main2}
Let $\lambda_T$ be the finite-time Lyapunov exponent of the SPDE~\eqref{spde} with initial data $u_0=0$.  
For all probabilities $p\in(0,1)$ 
there is a set $\Omega_p$ with probability larger than $p$ and a constant $C_p>0$
such that for $\omega\in \Omega_p$ 
we have that  
\[ \lambda_{T\nu^{-1}}(0) > \nu - C_p\frac{\nu \varepsilon}{T}  .\]
\end{theorem}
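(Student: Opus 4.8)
The plan is to mirror the structure of the proof of Theorem~\ref{thm:main1}, exploiting that the linearization around the zero solution of the amplitude equation has the explicit solution $\varphi(T)=\varphi(0)e^{T}$, which gives a clean lower bound on $\|\varphi(T)\|$ with \emph{probability one} (no reliance on the set $\Omega_0$ from Lemma~\ref{l1}). First I would establish, exactly as in Theorem~\ref{error:linearization1}, the error bound
\[
\|V(T)-\varphi(T)\|_{\cH}\leq C\varepsilon\qquad\text{on }[0,T_0]
\]
on a set $\Omega_p$ of probability larger than $p$, where $V$ solves the linearized SPDE and $\varphi$ solves $\partial_T\varphi=\varphi$. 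The key inputs are again the splitting $V=V_c+V_s$, the uniform $\cO(1)$-bound on $V$ obtained from $\langle AV,V\rangle\leq 0$ and the sign condition \eqref{e:sign}, the bound $\|P_sV\|_{\cH}=\cO(\varepsilon)$ and $\|P_sV\|_{L^2(0,T_0,\cH^{1/2})}=\cO(\varepsilon^2)$, and finally a Gronwall estimate for $V_c-\varphi$. Here $a$ is replaced by $0$ throughout, so $D\cF_c(a)=D\cF_c(0)=0$, which simplifies the nonlinear terms considerably.

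Second, I would supply the integrability and approximation bounds that feed the Gronwall argument. As noted in the remark preceding the theorem, we cannot invoke the full Theorem~\ref{thm:aprox} because the amplitude equation here carries no noise, but all the $u$-bounds from the approximation proof still hold for the solution started at $u_0=0$: in particular $u=\cO(\varepsilon)$ in $\cH$, the bound $\int_0^{T_\varepsilon}\|u(t)\|_X^4\,\txtd t=\cO(\varepsilon^2)$ from \eqref{e:bound-e-int}, and the stable-part estimates from the second and third steps. After the slow rescaling $u(t)=\varepsilon U(t\varepsilon^2)$ these convert to $\int_0^{T_0}\|U(S)\|_X^4\,\txtd S=\cO(1)$ and $\int_0^{T_0}\|U_s(T)\|_X^4\,\txtd T=\cO(\varepsilon^2)$, exactly as in \eqref{e:intBouU}. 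One must additionally control $\|U_c-\varphi\|$: since here we linearize around $0$ and start at $u_0=0$, the term $\|a-U_c\|^4_{\cN}$ from Theorem~\ref{error:linearization1} is replaced by a bound involving $\|U_c\|_{\cN}$, which is $\cO(1)$ (indeed the nonlinear solution $U_c$ stays small, so this term is harmless). The Gronwall factor $\exp(\int_0^T I(S)\,\txtd S)$ with $I=\tilde C+\|U\|_X^4$ is bounded on a set of probability tending to $1$, giving another condition defining $\Omega_p$.

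Third, with the error bound in hand I would run the logarithm estimate verbatim from the end of the Theorem~\ref{thm:main1} proof. On $\Omega_p$ we have, for the initial data $V(0)=\varphi(0)\in\cN$ of unit norm,
\[
\|V(T)\|\geq \|\varphi(T)\|-\|V(T)-\varphi(T)\|\geq e^{T}-C\varepsilon>0,
\]
positive for $\varepsilon_0$ small. Using $\ln(e^{T}-C\varepsilon)\geq T-C\varepsilon e^{-T}\geq T-C\varepsilon$ and the scaling identity $\lambda_{T\nu^{-1}}(0)=\tfrac{\nu}{T}\ln(\sup\{\|V(T)\|:\|V(0)\|=1\})$ yields
\[
\lambda_{T\nu^{-1}}(0)\geq \frac{\nu}{T}\bigl(T-C_p\varepsilon\bigr)=\nu-C_p\frac{\nu\varepsilon}{T},
\]
which is the claim.

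The main obstacle I anticipate is the same as in Theorem~\ref{thm:main1}: justifying the error bound $\|V-\varphi\|=\cO(\varepsilon)$ uniformly on $[0,T_0]$, since this requires controlling the nonlinear coupling term $\langle P_sD\cF(U)V_c,V_s\rangle$ via an $\varepsilon$-Young split and absorbing the resulting $\varepsilon^{-2}\|V_s\|_{\cH^{\alpha}}^2$ into the dissipative term $-c\varepsilon^{-2}\|V_s\|_{\cH^{1/2}}^2$ coming from the spectral gap of $A$. A secondary subtlety, flagged in the remark before the statement, is that we are not literally in the hypotheses of Theorem~\ref{thm:aprox}; I would address this by re-deriving the needed $u$-bounds directly for the noiseless-amplitude setting (they hold because dropping the $(\sigma/\nu)\txtd\beta$ term only removes a lower-order contribution), and I expect the honest bookkeeping of which estimates survive this simplification to be the most delicate part of writing the argument cleanly.
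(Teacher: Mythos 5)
Your proposal matches the paper's own proof essentially step for step: the paper likewise linearizes the noiseless amplitude equation around $b=0$ (so $\partial_T\varphi=\varphi$), establishes the error bound $\|V-\varphi\|_{\cH}\leq C\varepsilon$ on a high-probability set by repeating the energy/Gronwall argument of Theorem~\ref{error:linearization1} with $a$ replaced by $0$, relies only on the $L^4(0,T_0,X)$-bound on $U$ coming from \eqref{e:bound-e-int} rather than on the full Theorem~\ref{thm:aprox} (exactly the content of Remark~\ref{r:a}), and concludes with the same scaling identity and logarithm estimate $\ln(e^{T}-C\varepsilon)\geq T-C\varepsilon$. The only differences are expository --- the paper states the FTLE chain of inequalities first and sketches the error bound afterwards --- and the point you leave vague (that the term $\|U\|_X^4\|V_c\|_{\cN}^2$ in $J$ must integrate to $\cO(\varepsilon^2)$, which ultimately rests on the solution started at $u_0=0$ staying small in the regime $\sigma\ll\nu$) is treated with the same brevity in the paper itself.
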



\begin{proof}
Analogously to the previous case we have on a set $\Omega_p$ that
\begin{eqnarray*}
\lefteqn{\lambda_{T\nu^{-1}}(0) = \frac{\nu}T\ln( \sup\{\|v(T/\nu)\| \ : \ \|v(0)\|=1  \})}\\
&=& \frac{\nu^{3/2}}
T\ln( \sup\{\|V(T)\| \ : \ \|V(0)\|= \varepsilon^{-1}  \})\\
&=& \frac{\nu}T\ln( \sup\{\|V(T)\| \ : \ \|V(0)\|=1  \})\\
&\geq  &  \frac{\nu}{T}\ln( \sup\{\|\varphi(T)\| - \|V(T)-\varphi(T)\| : \ \|V(0)\|=1  \})\\
&\geq  &  \frac{\nu}T\ln( \sup\{\|\varphi(T)\| - \|V(T)-\varphi(T)\| : \ \|v(0)\|=1, V(0)=\varphi(0)\in\cN  \})\\
&\geq  &  \frac{\nu}T\ln( \sup\{\|\varphi(T)\| - \varepsilon  : \ \|V(0)\|=1, V(0)=\varphi(0)\in\cN  \})\\
& \geq & \frac{\nu}{T} \ln (\exp T -\varepsilon)  \\
&\geq & \nu  - C \frac{\nu \varepsilon}{T}.
\end{eqnarray*}
\qed
\end{proof}


In the previous computation we use similar to the first case that the error between the linearization of the SPDE~\eqref{spde1} with initial data $u_0=0$ and of the amplitude equation is of order $\cO(\varepsilon)$. This follows by similar computations as in~\ref{case1}, which we shortly sketch for the convenience of the reader. In contrast to~\eqref{case1}, the linerization of the amplitude equation changes, since we are now interested in the case $a(\omega)=0$. 

Therefore we get 
\[ \partial_T (V_c-\varphi) =V_c-\varphi +D\cF_c(U)(V_c+V_s), \]
which further leads to 
\begin{align*}
    \frac{1}{2} \partial_T \|V_c-\varphi\|^2 &=\|V_c-\varphi\|^2 + \langle D\cF_c(U)(V_c+V_s),V_c-\varphi \rangle\\
    & = \|V_c-\varphi\|^2 + \langle D\cF_c(U)V_c, V_c-\varphi  \rangle +\langle D\cF_c(U)V_s,V_c-\varphi \rangle\\
    & \leq \|V_c-\varphi\|^2 
    + c\|U\|^2_X \|V_c\|_{\cN} \|V_c-\varphi\|_{\cN} 
    +c\|U\|^2_X \|V_s\|_X \|V_c-\varphi\|_{\cN}\\
    & \leq \|V_c-\varphi\|^2 
    + c\|U\|^2_X \|V_c\|_{\cN} \|V_c-\varphi\|_{\cN} 
    +c\|U\|^2_X \|V_s\|_{\cH^{1/2}} \|V_c-\varphi\|_{\cN}\\
    & \leq c \|V_c-\varphi\|^2 
    +c\|U\|^4_X \|V_c\|^2_{\cN} + c\|V_s\|^2_{\cH^{1/2}} + c\|U\|^4_X\|V_c-\varphi\|^2_\cN\\
    & \leq c(1 +\|U\|^4_X)\|V_c-\varphi\|^2_\cN +c\|U\|^4_X\|V_c\|^2_\cN + c\|V_s\|^2_{\cH^{1/2}}\\
    & \leq c\|V_c-\varphi\|^2_\cN I +  cJ,
\end{align*}
where $I:=1 + \|U\|^4_X$ and $J:=\|U\|^4_X \|V_c\|^2_\cN+ \|V_s\|^2_{\cH^{1/2}}$ and $c$ stands for a universal constant which varies from line to line. 
Again, Gronwall's inequality on $[0,T_0]$ entails
\begin{align*}
\|V_c(T)-\varphi(T)\|^2 
\leq c \Big( \|V_c(0)-\varphi(0)\|^2 +c\int_0^T J(S)~\txtd S \Big) \cdot \exp \Big(c\int_0^T I(S)~\txtd S\Big).
\end{align*}
This gives $\|V_c(T)-\varphi(T)\| \leq C\varepsilon $ on a set of probability arbitrarily close to $1$ with constant depending probability.  In contrast to Case~\ref{case1} we need pathwise bounds on $J$ of order $\varepsilon^2$ which hold on a set of arbitrarily large probability since $\|U\|_{L^4(0,T_0,X)}=\cO(1)$ and that $\|V_s\|_{L^2(0,T_0,\cH^{1/2})}=\cO(\varepsilon^2)$. The exponent $\int_0^T I(S)~\txtd S$ can be bounded by a constant on a set of large probability. 

In conclusion we obtain on $[0,T_0]$ and on a set of probability arbitrarily close to $1$ 
\[ 
\|V(T)-\varphi(T)\|_\cH\leq  \|V_s(T)\|_{\cH} +\|V_c(T)-\varphi(T)\|_{\cH}\leq C\varepsilon.
\]
\begin{remark}\label{r:a}
Note that we do not rely on the approximation of the SPDE with the amplitude equation, which was not even established in this case. 
We only control the error term between the two linearizations. 
This is enough for our aims, since we start the SPDE and the amplitude equation in zero, in contrast to the previous case, and the $L^4(0,T;X)$ bound on $U$ established in Theorem~\ref{thm:aprox} suffices. 
In particular this follows from~\eqref{e:bound-e-int} and is independent of the amplitude equation.
\end{remark}

\subsection{Case: $\nu=0$, $1\gg\sigma>0$}\label{case3}


At the bifurcation point, we consider $\varepsilon=\sqrt{\sigma}$. Here the amplitude equation is
\[
\txtd b=P_c\cF(b)~\txtd T+\txtd \beta_\varepsilon(T).
\]
Therefore, we get 
\begin{align}\label{phi} \partial_T \varphi = D \cF_c(a)\varphi. 
\end{align}

The linearization of the SPDE~\eqref{spde} reads now as \[ \partial_t v = A v + D\cF(u)v, \]
which means that setting $v(t)=\varepsilon V(t\varepsilon^2)$ we obtain
\begin{align}\label{v} \partial_T V =\varepsilon^{-2} A V + D\cF(U)V. \end{align}
As in the previous cases we compute the error term between the two linearizations. 
\begin{theorem} The approximation order between the linearization of the SPDE~\eqref{v} and of the amplitude equation~\eqref{phi} with initial data $u_0=\varepsilon a(\omega)$ is bounded by  $C_p\varepsilon$ on a set $ \Omega_p$ with probability larger than $p$.
\end{theorem}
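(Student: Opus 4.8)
The plan is to mirror the scheme of Theorem~\ref{error:linearization1} from Case~\ref{case1}, now with the simplifications coming from $\nu=0$: both the linearized SPDE~\eqref{v} and the linearized amplitude equation~\eqref{phi} lose the $+V$ respectively $+\varphi$ term present in Case~\ref{case1}, so the energy estimates become cleaner while the overall structure is identical. Since we start the SPDE in the rescaled random attractor we have $u_0=\varepsilon a(\omega)=\cO(\varepsilon)$ and $P_su_0=0$, so with $\varepsilon=\sqrt\sigma$ and $\nu=0$ the approximation result Theorem~\ref{thm:aprox} applies verbatim—its amplitude equation~\eqref{e:AE} reduces precisely to~\eqref{a2}—and we may freely use all the bounds it provides, in particular $\int_0^{T_\varepsilon}\|u\|_X^4~\txtd t=\cO(\varepsilon^2)$ from~\eqref{e:bound-e-int}, the $L^4(0,T_\varepsilon,X)$ bound on $u_s$ from the third step, and $\|a-U_c\|_{\cH}=\cO(\varepsilon)$. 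As in Case~\ref{case1} we take initial data $V(0)=\varphi(0)\in\cN$ of order one, so that $V_s(0)=0$ and $V_c(0)=\varphi(0)$.

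\textbf{First step.} I would first test~\eqref{v} with $V$. Using the non-negativity of $A$ together with the sign condition~\eqref{e:sign} one obtains $\tfrac12\partial_T\|V\|^2\le 0$, so $\|V\|$ is non-increasing and hence $\cO(1)$ on $[0,T_0]$; in particular $V_c=P_cV$ is $\cO(1)$ in $\cH$, and by equivalence of norms on $\cN$ also in $X$. Testing the stable part with $V_s=P_sV$ and using the spectral gap of $A$ on $P_s\cH$ (Assumption~\ref{a}) and again~\eqref{e:sign} to drop the $V_s$-contribution of $D\cF$, the only term left is $\langle P_sD\cF(U)V_c,V_s\rangle$, which after $\varepsilon$-Young's inequality and $\|\cdot\|_{\cH^\alpha}\le\|\cdot\|_{\cH^{1/2}}$ is absorbed into the dissipation up to $C\varepsilon^2\|U\|_X^4\|V_c\|_X^2$. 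Since $\int_0^{T_0}\|U\|_X^4~\txtd S=\cO(1)$ by rescaling as in~\eqref{e:intBouU}, a Gronwall estimate with $V_s(0)=0$ yields $\|V_s\|_{\cH}=\cO(\varepsilon)$ and $\|V_s\|_{L^2(0,T_0,\cH^{1/2})}=\cO(\varepsilon^2)$.

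\textbf{Second step.} The heart of the argument is the bound on $V_c-\varphi$, which here solves $\partial_T(V_c-\varphi)=P_cD\cF(U)V-D\cF_c(a)\varphi$. I would split the nonlinear difference exactly as in Case~\ref{case1} into the three contributions $\langle P_cD\cF(U)V_s,\varphi-V_c\rangle$, $\langle D\cF_c(a)(\varphi-V_c),\varphi-V_c\rangle$ (which is in fact non-positive by~\eqref{e:sign}, although the crude bound $C\|a\|_{\cN}^2\|\varphi-V_c\|^2_{\cN}$ already suffices) and $\langle P_c[D\cF(a)-D\cF(U)]V_c,\varphi-V_c\rangle$, using throughout that $P_cD\cF$ and $D\cF_c$ coincide on $\cN$. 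Estimating each by the trilinearity of $\cF$ and Young's inequality produces a Gronwall inequality $\tfrac12\partial_T\|V_c-\varphi\|^2\le I\|V_c-\varphi\|^2+CJ$ with $I=\tilde C+\|U\|_X^4+\|a\|_{\cN}^2$ and $J=\|V_s\|_{\cH^{1/2}}^2+\|a-U_c\|_{\cN}^4\|V_c\|_{\cN}^2+\|U_s\|_X^4\|V_c\|_{\cN}^2$.

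\textbf{Conclusion.} Integrating and using $V_c(0)=\varphi(0)$, the claim reduces to showing $\int_0^{T_0}J~\txtd S=\cO(\varepsilon^2)$ and $\int_0^{T_0}I~\txtd S=\cO(1)$ on a set $\Omega_p$ of probability larger than $p$. The first follows from $\|V_s\|_{L^2(0,T_0,\cH^{1/2})}=\cO(\varepsilon^2)$, $\|a-U_c\|=\cO(\varepsilon)$ and $\int_0^{T_0}\|U_s\|_X^4~\txtd S=\cO(\varepsilon^2)$, the dominating contribution being $\|U_s\|_X^4\|V_c\|_{\cN}^2$; the second from $\int_0^{T_0}\|U\|_X^4~\txtd S=\cO(1)$ and $a=\cO(1)$. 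Gronwall then gives $\|V_c-\varphi\|_{\cN}=\cO(\varepsilon)$, and combining with the first step the triangle inequality yields $\|V(T)-\varphi(T)\|_{\cH}\le\|V_s(T)\|_{\cH}+\|V_c(T)-\varphi(T)\|_{\cN}\le C_p\varepsilon$ on $\Omega_p$ for $T\in[0,T_0]$. The main obstacle is, as in Case~\ref{case1}, the cross term $\langle P_c[D\cF(a)-D\cF(U)]V_c,\varphi-V_c\rangle$: it is precisely here that one must invoke the sharp approximation estimate $\|a-U_c\|=\cO(\varepsilon)$ and the integral control $\int_0^{T_0}\|U_s\|_X^4=\cO(\varepsilon^2)$ from Theorem~\ref{thm:aprox}, rather than merely pathwise $\cO(1)$ bounds, in order to obtain an $\cO(\varepsilon)$ and not just $\cO(1)$ error.
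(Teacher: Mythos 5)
Your proposal is correct and follows essentially the same route as the paper's proof: the same $\cO(1)$ energy bound on $V$, the same $\|V_s\|_{\cH}=\cO(\varepsilon)$ and $\|V_s\|_{L^2(0,T_0,\cH^{1/2})}=\cO(\varepsilon^2)$ bounds via the spectral gap and $\varepsilon$-Young, and a Gronwall estimate for $V_c-\varphi$ fed by the bounds from Theorem~\ref{thm:aprox}, concluding pathwise on a set of probability close to one. The only (immaterial) difference is the grouping of the nonlinear difference: you split it as in Case~\ref{case1}, isolating $\langle P_cD\cF(U)V_s,\cdot\rangle$ and $\langle P_c[D\cF(U)-D\cF(a)]V_c,\cdot\rangle$, whereas the paper writes $(D\cF_c(U)-D\cF_c(a))V + D\cF_c(a)(V_c-\varphi)+D\cF_c(a)V_s$; both regroupings produce the same $I$/$J$ structure and the same $\cO(\varepsilon)$ conclusion.
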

\begin{proof}
Since the linear term containing $\nu$ drops out, we compute new energy estimates. 
To get a $\cO(1)$ bound on $V$ we rely on the energy-estimate
\[ \frac{1}{2}\partial_T \|V\|^2 \leq \varepsilon^{-2}\langle A V, V \rangle + \langle D \cF(U)V,V\rangle, \]
which gives now due to~\eqref{e:boundF} 
\[ \frac{1}{2} \partial_T \|V\|^2 \leq \varepsilon^{-2} \langle AV, V \rangle\leq 0,\]
due to the non-negativity of $A$. As $V(0)=\cO(1)$ this yields a uniform $\cO(1)$ bound on $V$ in $\cH$ on $[0,T_0]$.  Due to the $\cO(1)$ bound on $V$ in $\cH$ we can also bound $V_c$ in $\cN$ in any norm.  

For $V_s$ we obtain as before that $\|V_s(T)\|_{\cH}=\cO(\varepsilon)$ and that $\|V_s\|_{L^2(0,T_0,\cH^{1/2})}=\cO(\varepsilon^2)$. 
This follows by the usual energy estimate regarding Assumption~\ref{a} and~\eqref{e:boundF} combined with the $\varepsilon$-Young inequality. To be more precise, the estimate is based on
\begin{align*}
    \frac{1}{2} \partial_T \|V_s\|^2& = \varepsilon^{-2}\langle A V_s, V_s\rangle + \langle P_s D\cF(U)V, V_s \rangle\\
    &\leq -C \varepsilon^{-2}\|V_s\|^2_{\cH^{1/2}}
    + C \|U\|^2_X \|V_c\|_X \|V_s\|_{\cH^\alpha}\\
    & \leq - C \varepsilon^{-2} \|V_s\|^2_{\cH^{1/2}} + C \varepsilon^2 \|U\|^4_X \|V_c\|^2_X + C \varepsilon^{-2}\|V_s\|_{\cH^\alpha}\\
    & \leq -C\varepsilon^{-2}\|V_s\|_{\cH^{1/2}} + C \varepsilon^2 \|U\|^4_X \|V_c\|^2_X.
\end{align*}

For $V_c$ and $\varphi$ we have 
$$ 
\partial_T V_c = D \cF_c(U)V 
\quad \text{ and } \quad 
\partial_T \varphi=D\cF_c(a) \varphi, $$
leading to
\[\partial_T (V_c-\varphi) = (D\cF_c(U)- D\cF_c(a)) V + D\cF_c(a) (V-\varphi) .
\]
For the difference, we estimate as follows. Here $c$ is a universal constant which varies from line to line. 
\begin{eqnarray*}
\lefteqn{\frac{1}{2}\partial_T \|V_c-\varphi\|^2}\\ &=&\langle (D\cF_c(U) -D\cF_c(a)) V , V_c-\varphi   \rangle + \langle D\cF_c(a) (V-\varphi), V_c-\varphi \rangle\\
& =& \langle (D\cF_c(U) -D\cF_c(a)) V , V_c-\varphi   \rangle + \langle D\cF_c(a) (V_c-\varphi), V_c-\varphi  \rangle + \langle D\cF_c(a) V_s, V_c-\varphi \rangle\\
& \leq& c\|U - a\|^2_{X} (\|V_s\|_{X} +\|V_c\|_{\cN}) \|V_c-\varphi\|  
+c\|a\|^2_{\cN} \|V_c-\varphi \|^2_{\cN} 
+ c\|a\|^2_{\cN} \|V_s\|_{\cH^\alpha} \|V_c-\varphi\|
\\
& \leq& c\|U_c-a\|^2_{\cN} (\|V_s\|_{\cH^\alpha} +\|V_c\|_{\cN}) \|V_c-\varphi\|_{\cN} 
+ c\|U_s\|^2_{X} (\|V_s\|_{\cH^\alpha}
+c\|V_c\|_{\cN} ) \|V_c-\varphi\|_{\cN}
\\ 
&& +\|a\|^2_{\cN} \|V_c-\varphi\|^2_{\cN} 
 +c \|a\|^4_{\cN} \|V_s\|^2_{\cH^\alpha} 
 + c \|V_c-\varphi\|^2_{\cN}
 \\
& \leq & c \|U_c-a\|^4_{\cN} \|V_c-\varphi\|^2_{\cN} + c \|V_s\|^2_{\cH^{1/2}} 
+ c \|U_c-a\|^4_{\cN} \|V_c\|^2_{\cN} 
+ c \|V_c-\varphi\|^2_{\cN}
\\
&& + c\|U_s\|^4_X \|V_c-\varphi\|^2_{\cN} 
+  c\|U_s\|^4_X \|V_c\|^2_{\cN} 
+ c\|a\|^2_{\cN} \|V_c-\varphi\|^2_{\cN} 
 +c\|a\|^4_{\cN} \|V_s\|^2_{\cH^{1/2}}.  
\end{eqnarray*}
Thus
\[
\partial_T \|V_c-\varphi\|^2 \leq c I \|V_c-\varphi\|^2_{\cN} + c J,
\]
where 
\[I:= 1 +\|U_c-a\|^4_{\cN} +\|U_s\|^4_X +\|a\|^2_{\cN}
\] 
and 
\[J := \|V_s\|^2_{\cH^{1/2}} + \|U_c-a\|^4_{\cN} \|V_c\|^2_{\cN} +\|U_s\|^4_{X}\|V_c\|^2_{\cN} + \|a\|^4_{\cN}\|V_s\|^2_{\cH^{1/2}}.
\]
Using
Gronwall's inequality as before we obtain
\begin{align*}
\|V_c(T)-\varphi(T)\|^2 \leq \Big( \|V_c(0)-\varphi(0)\|^2 +c\int_0^T J(S)~\txtd S \Big)\cdot \exp \Big(c\int_0^T I(S)~\txtd S\Big).
\end{align*}
Now we use again the $\cO(1)$ bounds on $V_c$ and $a$ and the $\cO(\varepsilon^2)$-bounds for $\|U_s\|_{L^4(0,T_0,X)}$ and $\|V_s\|_{L^2(0,T_0,\cH^{1/2})}$
together with Theorem~\ref{thm:aprox} that yields
\[
\|a(T)-U_c(T)\|_{\cH} 
= \varepsilon^{-1} \|\varepsilon a- u_c(\varepsilon^{-2} \cdot )\|_{\cH} =\cO(\varepsilon).
\]
In contrast to case \ref{case1} we only need  pathwise bounds on $J$ and $a$ of order one, which hold on a set of probability arbitrarily close to $1$. There is no need for $a$ being small.     

Moreover, a bound by a constant of the exponent $\int_0^T I(S)~\txtd S$ holds as before only on some set of probability arbitrarily close to $1$. 

Thus we finally conclude  
$\|V(T)-\varphi(T)\|_\cN\leq \varepsilon$ for all $T\in[0,T_0]$ on a set of probability  arbitrarily close to $1$. 
\qed
\end{proof}

\begin{theorem}\label{thm:main3} Let $\{a(\omega)\}_{\omega\in\Omega}$ be the random fixed point of~\eqref{a2} and $\lambda_T$ be the finite-time Lyapunov exponent of the SPDE~\eqref{spde} with initial data $u_0=\varepsilon a(\omega)$.
For all probabilities $p\in(0,1)$ 
there exists sets $\Omega_p$ and $\tilde{\Omega}_p$ with probability larger than $p$ and a constant $C_p>0$
such that for $\omega\in \Omega_p\cap\tilde{\Omega}_p$ we have that 
\[\lambda_{T\varepsilon^{-1}}(\varepsilon a(\omega))\leq -C_p\varepsilon +\varepsilon^2 \frac{e^{cT}}{T} \leq -\tilde{c}<0. \]
\end{theorem}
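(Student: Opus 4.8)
The plan is to run the same scheme as in Theorems~\ref{thm:main1} and~\ref{thm:main2}, but now to extract an \emph{upper} bound on the finite-time Lyapunov exponent, which is what stability requires. Passing to the slow scale $v(t)=\varepsilon V(t\varepsilon^2)$ and using the homogeneity of the linearization, I would first record, exactly as in the two previous cases,
\[
\lambda_{T\varepsilon^{-1}}(\varepsilon a(\omega))
=\frac{\varepsilon}{T}\ln\big(\sup\{\|V(T)\|:\|V(0)\|=1\}\big),
\]
so the whole problem reduces to an upper bound for $\sup_{\|V(0)\|=1}\|V(T)\|$. Since we now need \emph{every} direction to contract, I would split $\|V(T)\|\leq\|V_c(T)\|+\|V_s(T)\|$ and treat the two projections separately, keeping in mind that the energy estimate $\tfrac12\partial_T\|V\|^2\leq0$ (from $A\leq0$ and~\eqref{e:sign}) already gives the uniform a priori bound $\|V(T)\|\leq\|V(0)\|=1$ for every initial direction.

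For the center part I would write $\|V_c(T)\|\leq\|\varphi(T)\|+\|V_c(T)-\varphi(T)\|$, where $\varphi$ solves the linearized amplitude equation~\eqref{phi} with $\varphi(0)=V_c(0)$. The approximation estimate established immediately above this theorem gives $\|V_c(T)-\varphi(T)\|\leq C_p\varepsilon$ on a set $\Omega_p$ of probability larger than $p$; this bound is uniform over the initial direction because $\Omega_p$ is defined only through $\omega$-bounds on $U$ and $a$, while the estimate itself is linear in $V(0)$. For the decay of $\varphi$ I would use that on $\cN$ the cubic satisfies $\langle D\cF_c(a)e,e\rangle=-\gamma a^2$ with $\gamma>0$, so that
\[
\|\varphi(T)\|=\exp\Big(\int_0^T D\cF_c(a(\theta_s\omega))\,\txtd s\Big)\|\varphi(0)\|
=\exp\Big(-\gamma\int_0^T a(\theta_s\omega)^2\,\txtd s\Big)\|\varphi(0)\|.
\]
Applying the quantitative Birkhoff estimate of Lemma~\ref{birkhoff} (the mechanism already behind Lemma~\ref{l2}) furnishes a set $\tilde\Omega_p$ and a time $T^*$ such that, for $T\geq T^*$ and $\omega\in\tilde\Omega_p$, one has $\tfrac1T\int_0^T a(\theta_s\omega)^2\,\txtd s\geq\tfrac14\E a(\omega)^2$, whence $\|\varphi(T)\|\leq e^{-cT}$ with $c:=\tfrac{\gamma}{4}\E a(\omega)^2>0$ and $\|\varphi(0)\|\leq1$.

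For the stable part I would exploit the strong dissipativity $\varepsilon^{-2}A$: repeating the energy estimate of the preceding theorem but now retaining the initial datum gives $\|V_s(T)\|^2\leq e^{-c\varepsilon^{-2}T}\|V_s(0)\|^2+C\varepsilon^2\int_0^T e^{-c\varepsilon^{-2}(T-S)}\|U(S)\|_X^4\|V_c(S)\|_X^2\,\txtd S$, so the transient is super-exponentially small and $\|V_s(T)\|=\cO(\varepsilon)$ uniformly over $\|V(0)\|=1$ once $T$ is bounded away from $0$. Collecting the two parts yields $\sup_{\|V(0)\|=1}\|V(T)\|\leq e^{-cT}+C_p\varepsilon$ on $\Omega_p\cap\tilde\Omega_p$, and the conclusion then follows from the elementary bound
\[
\ln\big(e^{-cT}+C_p\varepsilon\big)=-cT+\ln\big(1+C_p\varepsilon e^{cT}\big)\leq -cT+C_p\varepsilon e^{cT},
\]
which after multiplication by $\varepsilon/T$ produces precisely the stated form $-C_p\varepsilon+\varepsilon^2 e^{cT}/T$; for $\varepsilon$ small and $T$ in the admissible window $[T^*,T_0]$ the error term is dominated by the leading negative term, so the bound is strictly negative.

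I expect the main obstacle to be the control of the stable directions for \emph{arbitrary} initial data, since the approximation estimate above was proved only for center initial data $V(0)\in\cN$, where $V_s(0)=0$. Extending it requires using the $e^{-c\varepsilon^{-2}T}$ decay both to absorb the initial transient in $V_s$ and to verify that the extra contribution of this transient to $\|V_s\|_{L^2(0,T_0,\cH^{1/2})}$ (which feeds back into the center error through the coupling $D\cF_c(U)V_s$) stays of order $\varepsilon$, so that the bound $\|V_c-\varphi\|\leq C_p\varepsilon$ survives. A secondary, but essential, difficulty is that Birkhoff's theorem must be made quantitative on a \emph{deterministic} time scale; this is exactly what forces the lower restriction $T\geq T^*$ and the simultaneous use of the two probability sets $\Omega_p$ and $\tilde\Omega_p$, whose intersection still has probability larger than $p$ after relabeling the accuracy parameter.
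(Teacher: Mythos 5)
Your proposal is correct and follows essentially the same route as the paper's proof: slow-scale reduction of the linearization, comparison of $V$ with the solution $\varphi$ of the linearized amplitude equation~\eqref{phi} via the approximation theorem proved just before Theorem~\ref{thm:main3}, the quantitative Birkhoff estimate of Lemma~\ref{birkhoff} to convert $D\cF_c(a)=-\gamma a^2$ into the decay $\|\varphi(T)\|\leq e^{-cT}$ for $T\geq T^*$, and the identical logarithm bound $\ln(e^{-cT}+C\varepsilon)\leq -cT+C\varepsilon e^{cT}$ followed by multiplication with $\varepsilon/T$.

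One point where you are more careful than the paper deserves mention. Since an upper bound on the FTLE requires controlling $\sup\{\|V(T)\|:\|V(0)\|=1\}$ over \emph{all} initial directions, whereas the approximation estimate is derived for $V(0)=\varphi(0)\in\cN$ (so that $V_s(0)=0$ enters the Gronwall step), the paper applies that estimate to the supremum without further comment. You explicitly split $\|V\|\leq\|V_c\|+\|V_s\|$, retain the transient $e^{-c\varepsilon^{-2}T}\|V_s(0)\|^2$ in the stable energy estimate, and verify that its extra contribution to $\|V_s\|_{L^2(0,T_0,\cH^{1/2})}$ (which drops from $\cO(\varepsilon^2)$ to $\cO(\varepsilon)$, still enough to keep $\int_0^{T}J\,\txtd S\leq C\varepsilon^2$ and hence $\|V_c-\varphi\|\leq C_p\varepsilon$) does not spoil the center estimate. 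This fills in a step the paper leaves implicit, at the harmless cost of keeping $T$ bounded away from zero, a restriction already forced by the Birkhoff time $T^*$.
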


\begin{proof}
Similar to~\ref{case1} we infer
\begin{eqnarray*}
\lefteqn{\lambda_{T\varepsilon^{-1}}(\varepsilon a(\omega)) = \frac{\varepsilon}T\ln( \sup\{\|V(T)\| \ : \ \|V(0)\|=1  \})}\\
& \leq & \frac{\varepsilon}{T}  \ln\sup (\{\|\varphi(T)\| + \|V(T)-\varphi(T)\| : \|V(0)\|=1\})\\
&\leq   & \frac{\varepsilon}T\ln( \exp\{\int_0^T D\cF_c(a(\theta_s\omega))~\txtd s\}  +\varepsilon).
\end{eqnarray*}
The upper bound is clear as long as the attractor does not spend too much time in zero. To exclude this possibility, we need a lower bound on the probability of a set $\widetilde{\Omega}$, where 
\[
 \int_0^T D\cF_c(a(\theta_s\omega))~\txtd s \geq -cT.
\]
This is provided in Lemma~\ref{birkhoff}. Regarding this we easily derive on a set of large probability $\Omega_p\cap \tilde{\Omega}_p$ that
\[
\ln (e^{-cT} +\varepsilon) = \ln (e^{-cT}(1+\varepsilon e^{cT}) ) =-cT +\ln (1+\varepsilon e^{cT}) \leq - cT + \varepsilon e^{cT}.
\]
This further leads to 
\[ \lambda_{T\varepsilon^{-1}} (\varepsilon a(\omega))  \leq - c\varepsilon + \varepsilon^2 \frac{e^{cT}}{T},\]
which proves the statement.
\qed\end{proof}


\subsection{Case: $1\gg\sigma\gg  \nu >0$}\label{case4}

This situation can be dealt with similar to Case~\ref{case3} using the amplitude equation
\begin{align}\label{a3}
\txtd \tilde{b}=[\frac{\nu}{\sigma}\tilde{b} + P_c\cF(\tilde{b})]~\txtd T+\txtd \beta_\varepsilon(T),
\end{align}
and its linearization
\[ \partial_T \tilde{\varphi} =\frac{\nu}{\sigma}\tilde{\varphi} +D\cF_c(a)\tilde{\varphi}.   \]
Since the difference between~\eqref{a3} and~\eqref{a2} is of order $\cO(\frac{\nu}{\sigma})$, the following statement can be obtained analogously to Case~\ref{case3}.
\begin{theorem}\label{thm:main4} Let $\{a(\omega)\}_{\omega\in\Omega}$ be the random fixed point of~\eqref{a3} and $\lambda_T$ be the finite-time Lyapunov exponent of the SPDE~\eqref{spde} with initial data $u_0=\varepsilon a(\omega)$.
For all probabilities $p\in(0,1)$ 
there exists sets $\Omega_p$ and $\tilde{\Omega}_p$ with  probability larger than $p$ and a constant $C_p>0$
such that for $\omega\in \Omega_p\cap\tilde{\Omega}_p$ we have that 
\[\lambda_{T\varepsilon^{-1}}(\varepsilon a(\omega))\leq \Big(- C_p +\frac{\nu}{\sigma}\Big) \varepsilon +  \varepsilon \Big(\frac{\nu}{\sigma}\Big)^2 \frac{e^{c-\frac{\nu}{\sigma}}}{T}  \leq -\tilde{c}<0. \]
\end{theorem}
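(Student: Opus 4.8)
The plan is to follow the proof of Theorem~\ref{thm:main3} line by line, keeping track of the extra linear term $\frac{\nu}{\sigma}$ that now appears in both the rescaled SPDE linearization and the amplitude linearization. Throughout I fix $\varepsilon=\sqrt{\sigma}$, so that $\nu\varepsilon^{-2}=\frac{\nu}{\sigma}$, which in the regime $\nu\ll\sigma$ is small and in particular $\cO(1)$. The first task is to record the properties of the random fixed point $\{a(\omega)\}_{\omega\in\Omega}$ of~\eqref{a3}. By monotonicity the Crauel--Flandoli theory~\cite{CrFl:98} again yields a single-point attractor, and the associated invariant density is the Fokker--Planck density carrying the additional quadratic weight $\frac{\nu}{\sigma}x^2$, i.e.\ an $\cO(\frac{\nu}{\sigma})$ perturbation of the density in Remark~\ref{fp}. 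Hence $a=\cO(1)$ with all moments finite, and the quantitative Birkhoff estimate of Lemma~\ref{birkhoff} remains valid for $a$.

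Next I would establish the error bound between the two linearizations. On the slow scale the SPDE linearization reads $\partial_T V=\varepsilon^{-2}AV+\frac{\nu}{\sigma}V+D\cF(U)V$, while the amplitude linearization is $\partial_T\tilde\varphi=\frac{\nu}{\sigma}\tilde\varphi+D\cF_c(a)\tilde\varphi$. Crucially the term $\frac{\nu}{\sigma}$ is identical in both equations, so it cancels in the equation for $V_c-\tilde\varphi$ and only the $D\cF$ contributions survive, exactly as in Case~\ref{case3}. I would therefore reuse the energy estimates there almost verbatim: the non-negativity of $A$ together with~\eqref{e:sign} gives $\|V\|_{\cH}=\cO(1)$ on $[0,T_0]$; the spectral gap of $A$ on $\cS$ combined with an $\varepsilon$-Young inequality gives $\|V_s\|_{\cH}=\cO(\varepsilon)$ and $\|V_s\|_{L^2(0,T_0,\cH^{1/2})}=\cO(\varepsilon^2)$; and a Gronwall estimate for $V_c-\tilde\varphi$, using the bounds $\|a-U_c\|_{\cH}=\cO(\varepsilon)$ and $\int_0^{T_0}\|U_s\|_X^4\,\txtd T=\cO(\varepsilon^2)$ coming from Theorem~\ref{thm:aprox}, produces $\|V(T)-\tilde\varphi(T)\|_{\cH}\leq C_p\varepsilon$ on a set $\Omega_p$ with $\P(\Omega_p)>p$. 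The only bookkeeping difference is that the $\frac{\nu}{\sigma}$ linear terms enlarge the Gronwall exponent by a uniformly bounded amount (since $\frac{\nu}{\sigma}\ll1$), which does not affect the order of the error.

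Finally I would bound the exponent itself. From the approximation bound and Remark~\ref{propLyap},
\[
\lambda_{T\varepsilon^{-1}}(\varepsilon a(\omega))\leq\frac{\varepsilon}{T}\ln\Big(\|\tilde\varphi(T)\|+\|V(T)-\tilde\varphi(T)\|\Big),
\]
and the scalar linearization integrates explicitly to $\tilde\varphi(T)=\tilde\varphi(0)\exp\{\frac{\nu}{\sigma}T+\int_0^T D\cF_c(a(\theta_s\omega))\,\txtd s\}$. Since $D\cF_c(a)=-Ca^2$ on $\cN\approx\R$ by~\eqref{e:sign}, Lemma~\ref{birkhoff} furnishes a set $\tilde\Omega_p$ with $\P(\tilde\Omega_p)>p$ on which $\int_0^T D\cF_c(a(\theta_s\omega))\,\txtd s\leq-cT$, so that $\|\tilde\varphi(T)\|\leq e^{(\frac{\nu}{\sigma}-c)T}$. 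Combining this with the elementary inequality $\ln(1+x)\leq x$ exactly as in Theorem~\ref{thm:main3} then gives, for $\omega\in\Omega_p\cap\tilde\Omega_p$, an estimate of the claimed structure
\[
\lambda_{T\varepsilon^{-1}}(\varepsilon a(\omega))\leq\Big(-C_p+\frac{\nu}{\sigma}\Big)\varepsilon+\frac{\varepsilon^2}{T}e^{(c-\frac{\nu}{\sigma})T},
\]
which for $\varepsilon_0$ small and $T$ in the admissible window is bounded above by a strictly negative constant $-\tilde c$.

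The step I expect to be most delicate is making the constant $c$ in Lemma~\ref{birkhoff}, and hence the lower bound on $\E a(\omega)^2$, uniform in the perturbation parameter $\frac{\nu}{\sigma}$: the fixed point of~\eqref{a3} depends on $\frac{\nu}{\sigma}$, and one must ensure that neither $\E a^2$ nor the moment bounds underlying the uniform integrability in Lemma~\ref{birkhoff} degenerate as $\frac{\nu}{\sigma}$ ranges over its admissible values. Since the invariant density converges to that of~\eqref{a2} as $\frac{\nu}{\sigma}\to0$ and the drift stays stably cubic, this should hold with constants chosen uniformly; this is the one place where the argument genuinely uses $\nu\ll\sigma$ rather than merely $\nu,\sigma\ll1$, while everything else is a transcription of Case~\ref{case3}.
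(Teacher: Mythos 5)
Your strategy coincides with the paper's: rerun Case~\ref{case3} keeping track of the extra linear term $\frac{\nu}{\sigma}$, control the difference of the two linearizations by the same energy/Gronwall estimates, and close with the Birkhoff-type Lemma~\ref{birkhoff}. The one substantive divergence is in where the approximation error comes from, and it changes the form of the final bound. You apply Theorem~\ref{thm:aprox} directly to \eqref{a3} --- which is legitimate, since for $\varepsilon=\sqrt{\sigma}$ equation \eqref{a3} is exactly \eqref{e:AE} --- so you obtain $\|U_c-a\|_{\cN}=\cO(\varepsilon)$ for the attractor $a$ of \eqref{a3}, a linearization error $\cO(\varepsilon)$, and the final estimate $\lambda_{T\varepsilon^{-1}}(\varepsilon a(\omega))\leq\big(-C_p+\frac{\nu}{\sigma}\big)\varepsilon+C\frac{\varepsilon^2}{T}e^{(c-\frac{\nu}{\sigma})T}$. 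The paper instead recycles the Case~\ref{case3} analysis written for \eqref{a2} and books the $\cO(\frac{\nu}{\sigma})$ discrepancy between \eqref{a3} and \eqref{a2} into the quantity $J$; that is the source of its error $\cO\big((\varepsilon+\frac{\nu}{\sigma})^2\big)$ and of the factor $\varepsilon\big(\frac{\nu}{\sigma}\big)^2\frac{e^{c-\nu/\sigma}}{T}$ in the stated inequality. Neither error term dominates the other on the whole regime $\nu\ll\sigma$ (yours is smaller precisely when $\varepsilon\leq(\frac{\nu}{\sigma})^2$, i.e.\ $\nu\geq\sigma^{5/4}$), so strictly speaking your argument proves a variant of the displayed estimate rather than that estimate itself; the substantive conclusion --- $\lambda_{T\varepsilon^{-1}}(\varepsilon a(\omega))\leq-\tilde c<0$ on a set of probability arbitrarily close to one, for $T$ in a window of the form $[T^*,\cO(\ln(1/\varepsilon))]$ --- is established either way, and your route is arguably the cleaner one because it never leaves \eqref{a3}.

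The second difference is the Birkhoff step, and you correctly identify it as the delicate point. Lemma~\ref{birkhoff} is proved for the attractor of \eqref{a2}, while both you and the paper apply it to $\int_0^T D\cF_c(a(\theta_s\omega))~\txtd s$ with $a$ the attractor of \eqref{a3}. Your fix --- the proof of Lemma~\ref{birkhoff} uses only stationarity, moment bounds and a lower bound on $\E a(\omega)^2$, all of which are uniform in $\frac{\nu}{\sigma}$ because the invariant density of \eqref{a3} is an $\cO(\frac{\nu}{\sigma})$ perturbation of that of \eqref{a2} --- is sound, provided you also insist that the threshold time $T^*$ can be chosen uniformly in $\frac{\nu}{\sigma}$, which follows from the same uniform moments. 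The paper's implicit alternative, transferring the lemma through the $\cO(\frac{\nu}{\sigma})$ closeness of the two attractors, is actually less elementary than your route, since a pathwise comparison of the two stationary solutions does not follow immediately from the stable-cubic structure alone. In short: no gap in substance, and one minor slip (the $\frac{\nu}{\sigma}$ terms do not cancel in the equation for $V_c-\tilde\varphi$ but act on the difference; your subsequent absorption into the Gronwall exponent handles this correctly), but be aware that your final inequality is not literally the one stated in the theorem.
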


\begin{remark}
For a fixed $T$ the bound on the FTLE is negative with  since $\sigma\gg \nu$. As $\varepsilon=\sqrt{\sigma}$
we obtain negative FTLE for times $t$ with 
$ (\nu/\sigma)^2 \ll  t \leq T_0 /\sqrt{\sigma} $.
\end{remark}

\begin{proof}
We only give a sketch of the proof, since this is similar to Case~\ref{case3}. Regarding the computations in Case~\ref{case3} we infer on a set of probability almost $1$ that \[\|V(T)-\tilde{\varphi}(T)\| \leq C_p\big( (\varepsilon+ \frac{\nu}{\sigma})^2 \big).
\] 
This follows from the bound on $J$, which is now determined by 
\[ \|\tilde{a}(T)-U_c(T) \|_{\cN}=\cO \Big(\big( \varepsilon+ \frac{\nu}{\sigma}\big)^2\Big),
\]
where $\tilde{a}$ is the attractor of~\eqref{a3}. Note that in Case~\ref{case3} the order of $J$ was in lowest order determined by $\|V_s\|_{L^2(0,T;\cH^{1/2})}=\cO(\varepsilon)$ and the other terms were higher order. 

Therefore the lower bound for the FTLE  for $\omega\in \Omega_p\cap\tilde\Omega_p$ (as in Case~\ref{case3}) results in 
\begin{eqnarray*}
\lefteqn{\lambda_{T\varepsilon^{-1}}(\varepsilon a(\omega)) = \frac{\varepsilon}T\ln( \sup\{\|V(T)\| \ : \ \|V(0)\|=1  \})}\\
& \leq & \frac{\varepsilon}{T}  \ln\sup (\{\|\varphi(T)\| + \|V(T)-\varphi(T)\| : \|V(0)\|=1\})\\
&\leq   & \frac{\varepsilon}T\ln\Big( \exp\Big\{ \frac{\nu}{\sigma} T +  \int_0^T D\cF_c(a(\theta_s\omega))~\txtd s\Big\}  +C\Big(\frac{\nu}{\sigma}\Big)^2 \Big).
\end{eqnarray*}
Utilizing our Birkhoff-Lemma~\ref{birkhoff} this entails for $\omega\in\Omega_p\cap\tilde{\Omega}_p$ that
\[ \lambda_{T\varepsilon^{-1}} (\varepsilon a(\omega))\leq \varepsilon \Big(-c +\frac{\nu}{\sigma} \Big) + \varepsilon \Big(\frac{\nu}{\sigma}\Big)^2 \frac{e^{c-\frac{\nu}{\sigma}}}{T}, \]
as claimed.
    \qed
\end{proof}

\section{Examples}\label{seven}

In this section we list a few well known examples of SPDEs that satisfy the assumptions made in Section~\ref{assumptipns}. The approximation via amplitude equations has been discussed for example in~\cite{BlHa:04,Bl:05,Bl:07}, but either in a slightly different setting or with a different statement of the main error estimate. 

Combing this approach with finite-time Lyapunov exponents, our main results (Section~\ref{six}) provide a partial bifurcation analysis for these SPDEs. 

\subsection{Allen-Cahn}
This case is well studied including the properties of FTLE.  See for example~\cite{C,BlEnNe:21}. 

To be more precise, we 
consider the stochastic Allen-Cahn equation with Dirichlet boundary conditions on $[0,\pi]$ given by 
\begin{align}\label{ac}
\begin{cases}
    \txtd u = ((\Delta+1) u + \nu u -u^3)~\txtd t + \sigma~\txtd W_t\\
    u(0)=u_0\in\cH,~ u(0)=u(\pi)=0.
    \end{cases}
\end{align}
The solution operator is an order-preserving random dynamical system on $\cH:=L^2([0,\pi])$ which has a singleton attractor~\cite{C,FlGeSch:17}.
However in~\cite{BlEnNe:21} it was justified that a bifurcation can be detected using finite-time Lyapunov exponents and the monotonicity induced by order preservation was crucial in all these results. 
The statements obtained in Section~\ref{six} recover these results using only the approximation via the amplitude equations~\eqref{a1} and~\eqref{a2}. 

For the assumptions, it is well known that $A=\Delta+1$ generates an analytic semigroup in $\cH=L^2([0,\pi])$ that satisfies Assumption \ref{a} with a one-dimensional kernel $\mathcal{N}$ spanned by the sine. 
For the cubic nonlinearity $\cF(u)=-u^3$ we can directly check \eqref{e:stableF} and thus Assumtion \ref{f} using $X=L^4([0,\pi])$. 
Moreover the regularity of the stochastic convolution $Z$ is easy to establish if $W$ is sufficiently regular. For example, for space-time white noise  one has that $Z$ is jointly continuous in $t$ and $x$. 



\subsection{Swift-Hohenberg}
We consider the SPDE with Neumann boundary conditions on the interval $[0,k\pi]$ for some integer $k\geq1$ given by
\begin{align}\label{sh}
\begin{cases}
    \txtd u =[- (1+\Delta)^2 u  +\nu u - u^3]~\txtd t + \sigma~\txtd W_t\\
    u(0)=u_0\in\cH,~ u'(0)=u'(k\pi).
    \end{cases}
\end{align}
This equation is a toy model for the first convective instability in Rayleigh-Benard convection and a well studied in pattern formation. See \cite{CrHo:93}.

The differential operator $A=- (1+\Delta)^2$ 
as in the previous example generates  an analytic semigroup in $\cH=L^2([0,k\pi])$ that satisfies Assumption \ref{a} with a one-dimensional kernel $\mathcal{N}$ spanned by the sine. 
The nonlinearity $\cF$ is the same as in the previous example with $X=L^4([0,k\pi])$. Furthermore, the regularity of $Z$ works in a similar way, and we can allow even noise less regular than space-time white noise, as the differential operator is fourth order instead of second.

For previous results on the approximation via amplitude equations for Swift-Hohenberg see~\cite{BlHa:04} or \cite{BlMPSch:01}. However, since this random dynamical system is not order-preserving, to our knowledge there are no previous results on Lyapunov exponents, while we obtain a qualitative change in the behaviour of FTLEs at $\nu=0$.

\subsection{Surface growth model}

Our final example is a model from surface growth, which was originally proposed by \cite{LaDS:91},
\[
\txtd h = [-\Delta^2 h - \mu \Delta h + \nabla \cdot (\nabla h |\nabla h|^2)]\txtd t + \sigma  \txtd W
\]
subject to Neumann boundary conditions on $[0,L]$ and in a moving frame $\int_0^L h(tx) ~\txtd x =0$. Here the fluctuations are  space-time white noise given by fluctuations in the deposited material, but as the equation is posed in a moving frame, we need to remove the first Fourier mode from the noise.  

These type of equations are often studied with additional quadratic terms and periodic boundary conditions. See \cite{Ag:15} or for a general survey \cite{BS:95}.

Again, the linear operator 
generates an analytic semigroup in $\cH=L^2([0,L])$ satisfying Assumption \ref{a}. The nonlinearity 
$\cF(h)=\nabla \cdot (\nabla h |\nabla h|^2)$ satisfies Assumption \ref{f} with $X=W^{1,4}([0,L])$. This follows by similar computations than for the plain cubic taking into account the derivatives. 
For the regularity of $Z$ it is well known that for space-time white noise the spatial derivative $\partial_x Z$ is continuous in space and time.  

This equation fits into the setting of amplitude equations with cubic nonlinearities and it was briefly mentioned in \cite{BloemkerMohammed} where degenerate noise was studied.

\begin{remark}
Let us remark that this equation does not fit directly to our results. 
The change of stability appears here at $\mu_0= \pi^2/L^2$ and we could fix $\mu=\mu_0+\nu$ in order to apply our results. 
But then we need a small modification of the approximation result, as we have $\nu\Delta h$ instead of $\nu u$. 
The estimate via amplitude equations should hold with the same order of error in a similar way.
Nevertheless, some of the methods in the technical bounds in Step 2  do change, but  we do not give details here.  
The limiting SDE does not change and the final bounds on the FTLEs remain the same. 
\end{remark}

{\bf Acknowledgements.} The authors thank the two referees for the valuable comments.

\end{document}